\newcommand{\cC}{{C}}
  \def\<{{\langle}} 
  \def\>{{\rangle}}
  \def\note#1{{}}
  \def\note#1{} 
  \def\cA{{\mathcal A}}
   \def\cK{{\mathcal K}}  
  \def\cO{{\mathcal O}}
  \def\beq{\begin{equation}} 
  \def\eeq{\end{equation}}
  \def\ot{{\otimes}}
 \def\coker{\mathrm{coker}}
 \def\WP{\mathbb{WP}}
  \newcounter{zlist} 
  \newenvironment{zlist}{\begin{list}{(\arabic{zlist})}{ 
  \usecounter{zlist}\leftmargin2.5em\labelwidth2em\labelsep0.5em 
  \topsep0.6ex
  \parsep0.3ex plus0.2ex minus0.1ex}}{\end{list}}
  \newcounter{blist}
  \newcounter{rlist}
\def\stac#1{\raise-.2cm\hbox{$\stackrel{\displaystyle\otimes}{\scriptscriptstyle{#1}}$}}
\def\cten#1{\raise-.2cm\hbox{$\stackrel{\displaystyle\widehat{\otimes}}
{\scriptscriptstyle{#1}}$}}
  \def\Label#1{\label{#1}\ifmmode\llap{[#1] }\else 
  \marginpar{\smash{\hbox{\tiny [#1]}}}\fi} 
  \def\Label{\label}
  \newtheorem{proposition}{Proposition}[section]
  \newtheorem{lemma}[proposition]{Lemma} 
  \newtheorem{theorem}[proposition]{Theorem} 
  \theoremstyle{definition}
  \theoremstyle{remark}
  \newcounter{c} 
  \newcommand{\etyk}[1]{\vspace{-7.4mm}$$\begin{equation}\Label{#1} 
  \addtocounter{c}{1}} 
  \renewcommand{\]}{\ifnum \value{c}=1 $$\else \end{equation}\fi} 
\def\ot{\otimes}
\def\CC{{\mathbb C}}
\def\NN{{\mathbb N}}
\def\PP{{\mathbb P}}
\def\RR{{\mathbb R}}
\def\ZZ{{\mathbb Z}}
\def\hh{{\mathfrak H}}
\newcommand{\Cc}{\mathcal{C}}
\def\m{{\sf m}}
\def\*C{{}^*\hspace*{-1pt}{\Cc}}
\def\text#1{{\rm {\rm #1}}}
 \def\l{\mathbf{l}}
 \def\k{\mathbf{k}}
  \def\e{\mathbf{e}}
 \def\p{\mathbf{p}}
 \def\1{\mathbf{1}}
\begin{document} 

\title[Quantum multidimensional teardrops]{Notes on quantum weighted projective spaces and multidimensional teardrops} 
 \author{Tomasz Brzezi\'nski}
 \address{ Department of Mathematics, Swansea University, 
  Singleton Park, \newline\indent  Swansea SA2 8PP, U.K.} 
  \email{T.Brzezinski@swansea.ac.uk}   
\author{Simon A.\ Fairfax}
\email{201102@swansea.ac.uk}

    \date{\today} 
  \subjclass[2010]{58B32; 58B34} 
  \begin{abstract} 
It is shown that the coordinate algebra of the  quantum $2n+1$-dimensional lens space $\mathcal{O}(L^{2n+1}_q(\prod_{i=0}^n m_i; m_0,\ldots, m_n))$ is a principal $\mathbb{Z}$-comodule algebra or the coordinate algebra of a circle principal bundle over the weighted quantum projective space $\mathbb{WP}^n_q(m_0,\ldots, m_n)$. Furthermore,  the weighted $U(1)$-action or the $\mathbb{CZ}$-coaction on the quantum odd dimensional sphere algebra $\mathcal{O}(S^{2n+1}_q)$ that defines $\mathbb{WP}^n_q(1,m_1,\ldots, m_n)$ is free or principal. Analogous results are proven for quantum real weighted projective spaces $\mathbb{RP}^{2n}_q(m_0,\ldots, m_n)$. The $K$-groups of  $\mathbb{WP}^n_q(1,\ldots, 1, m)$ and $\mathbb{RP}^{2n}_q(1,\ldots, 1,m)$ and the $K_1$-group of $L^{2n+1}_q(N; m_0,\ldots, m_n)$ are computed 
  \end{abstract} 
  \maketitle

\section{Introduction}
Quantum weighted projective spaces were introduced in \cite{BrzFai:tea} as fixed points of the weighted $U(1)$-actions on the coordinate algebra of the quantum odd dimensional sphere $\cO(S^{2n+1}_q)$. The way the circle group acts mimics the action on the classical sphere that leads to weighted projective spaces, prime examples of orbifolds. Classically these actions are not free unless all weights are mutually equal in  which case one obtains the usual complex projective spaces. In the noncommutative case the situation is much more subtle, thereby more interesting. The studies in \cite{BrzFai:tea} concentrated on the case $n=1$ and revealed that if the first of the two weights is equal to one, then a suitable finite cyclic group action on $\cO(S^{3}_q)$ produces a lens space which in turn admits the  $U(1)$-action which is free and has the quantum weighted projective space as fixed points.   Subsequently, it has been shown  in \cite{Brz:smo} that the cyclic group action that defines the lens space which is non-free classically is free in the quantum case. The combination of these observations leads one to conclude that, in a remarkable contrast to the classical situation (and in contradiction to an erroneous claim made in \cite[Theorem~3.2]{BrzFai:tea}), the $U(1)$-action on  $\cO(S^{3}_q)$ that defines the quantum weighted projective space with the first weight equal to one is free. In the classical situation this space is the teardrop orbifold with a single singular point at the north pole (i.e.\ at the point with cartesian co-ordinates $(0,0,1)$). Its coordinate algebra is determined by a polynomial with a multiple root 1, and the existence of this multiple root prevents one from differentiation and hence from constructing a unique tangent plane at the singular point. From the algebraic point of view, freeness of the action is controlled by a rational function with the denominator a polynomial in $q$, which has a root at $q=1$. The introduction of non-commutativity in the form of a parameter $q\neq 1$, splits the multiple root 1 of the defining polynomial into separate roots (powers of $q$), thus allowing for differentiation and also restores the freeness of the defining action, since the controlling rational function of $q$ is well-defined in this case. Intuitively, one can visualise this process as blowing up a singular point into a sequence of spheres.  

In further development, it has been shown in \cite{AriKaa:Pim} that it is possible to consider a suitable lens space with a free $U(1)$-action which yields the quantum weighted projective lines with no restriction on weights as fixed points. The cyclic group action on $\cO(S^{3}_q)$ that  defines these lens spaces, however, is not free.

The most recent development is the analysis of quantum weighted projective spaces for all $n$, but with a restriction on the weights $m_0,\ldots, m_n$ which classically corresponds to projective varieties isomorphic to $\CC\PP^n$  \cite{DAnLan:wei}. When this restriction is imposed one can find an explicit description of generators and D'Andrea and Landi construct principal circle bundles over quantum weighted projective spaces. Fixed points of cyclic group actions on $\cO(S^{2n+1}_q)$ serve as total spaces for these bundles. The first aim of the present note is to extend these results to quantum weighted projective spaces with no restriction on weights, and also to show that the defining circle action on  $\cO(S^{2n+1}_q)$ is free provided $m_0=1$. The second aim is to take a closer look at multidimensional quantum teardrops, i.e. $n$-dimensional quantum weighted projective spaces with all bar the last weights being equal to one. Classically such projective varieties are not isomorphic to projective spaces unless $n=1$. We describe generators of multidimensional quantum teardrops, outline their representation theory and calculate  their $K$-groups. 
 
\section{Quantum freeness and strong gradings}\label{section.strong}\setcounter{equation}{0}
In noncommutative geometry, the notion of a free quantum group action on a noncommutative space is encoded in the notion of a principal coaction of the corresponding Hopf algebra on the coordinate algebra of the quantum space; see e.g.\  \cite{BrzHaj:Che}. If the quantum group is in fact an Abelian classical group, the principality is equivalent to the notion of {\em strong grading} \cite{NasVan:gra}. 

Let $G$ be a group. A {\em $G$-graded algebra} $\cA$ decomposes into a direct sum of subspaces $\cA_g$  labelled by $g\in G$ such that $\cA_g\cA_h\subseteq A_{gh}$, for all $g,h\in G$. In case $\cA_g\cA_h= A_{gh}$, for all $g,h\in G$, $\cA$ is said to be {\em strongly $G$-graded}. We will write $|a |_G$ for the $G$-degree of $a\in \cA$. Also, $|\cA |_G$ will denote the subalgebra of $\cA$ consisting of all the invariant elements, i.e.\ all elements with degree equal to the neutral element of $G$. As explained in \cite[Section~A.I.3.2]{NasVan:gra}, $\cA$ is strongly $G$-graded if and only if, for all generators $g$ of $G$, there exist a finite number of elements $a_i,b_i\in \cA$ such that
\begin{equation}\label{res}
|a_i|_G = g^{-1}, \quad |b_i|_G = g \quad \mbox{and} \quad \sum_i a_ib_i =1.
\end{equation}

Out of a given $G$-graded algebra $\cA$, a group   epimorphism $\pi: G\to H$, and a group monomorphism $\varphi: K\to G$ one can construct the following group graded algebras. First, $\pi$ induces 
 an $H$-grading on $\cA$ by setting, for all $h\in H$,
$$
\cA_h := \bigoplus_{g\in \pi^{-1}(h)} \cA_g. 
$$
Second, $\varphi$ yields a 
$K$-graded algebra  
$$
\cA^{(K)} := \bigoplus_{k\in K} \cA_{\varphi(k)} \subseteq \cA ,
$$
i.e.\ $|a|_K =k$ if $|a|_G = \varphi(k)$.
It is shown in \cite[Section~A.I.3.1.b]{NasVan:gra} that the above $H$- and $K$-gradings are strong provided the initial $G$-grading is strong. A converse to this statement can be proven in case of Abelian groups. More precisely
\begin{lemma}[cf.\ Lemma~A.1 in \cite{Brz:Wey}]\label{lemma.tower}
Consider a short exact sequence of Abelian groups:
\begin{equation}\label{exact}
\xymatrix{ 0 \ar[r] & K \ar[r]^\varphi & G \ar[r]^\pi & H \ar[r] & 0.}
\end{equation}
A $G$-graded algebra $\cA$ is strongly graded if  and only if  the  induced $H$-grading on $\cA$  and $K$-grading on $\cA^{(K)}$ are strong. 
\end{lemma}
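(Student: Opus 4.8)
The plan is to prove the forward direction of the biconditional, since the converse (strong $G$-grading implies strong $H$- and $K$-gradings) is already cited from \cite{NasVan:gra}. So I assume the induced $H$-grading on $\cA$ and the $K$-grading on $\cA^{(K)}$ are both strong, and I must show $\cA$ is strongly $G$-graded. By the criterion \eqref{res}, it suffices to produce, for each generator $g$ of $G$, finitely many $a_i, b_i \in \cA$ with $|a_i|_G = g^{-1}$, $|b_i|_G = g$, and $\sum_i a_i b_i = 1$. Equivalently, since $G$ is abelian and we want this for all $g\in G$, it is enough to show $\cA_{g^{-1}}\cA_g \ni 1$ (i.e.\ $\cA_g \cA_{g^{-1}} = \cA_e$) for every $g\in G$.

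First I would fix $g \in G$ and push it down to $h := \pi(g) \in H$. Because the $H$-grading is strong, there exist finitely many elements $x_j \in \cA_{h^{-1}}$ and $y_j \in \cA_h$ (in the sense of the $H$-grading) with $\sum_j x_j y_j = 1$. Decomposing each $x_j$ and $y_j$ into their homogeneous $G$-components supported on the fibres $\pi^{-1}(h^{-1})$ and $\pi^{-1}(h)$, and collecting the terms of the product that land in $\cA_e$ (the unit is $G$-homogeneous of degree $e$), I obtain finitely many pairs $u_k \in \cA_{g_k^{-1}}$, $v_k \in \cA_{g_k'}$ with $\pi(g_k) = \pi(g_k') = h$ and $\sum_k u_k v_k = 1$, where each product $u_k v_k$ has $G$-degree $g_k' g_k^{-1} \in \Ker\pi = \varphi(K)$. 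This is the step where the abelian/exactness hypothesis enters: the products whose $G$-degree lies in the kernel are exactly those contributing to $\cA_e$, and $\Ker\pi = \im\varphi$ lets me regard these kernel-degree elements as living in $\cA^{(K)}$.

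Next I would correct the $G$-degrees fibre by fibre using the strong $K$-grading. The elements $u_k v_k$ lie in $\cA_{\varphi(k_k)}$ for various $k_k \in K$ with $\sum_k u_k v_k = 1$, so in particular $1$ decomposes as a sum of elements of the subalgebra $\cA^{(K)} = \bigoplus_{k\in K}\cA_{\varphi(k)}$; since $1$ is $K$-homogeneous of degree $e_K$, only the $k_k = e_K$ terms survive. The remaining task is to adjust each surviving pair so that the individual factors carry $G$-degrees exactly $g^{-1}$ and $g$, rather than merely $g_k^{-1}$ and $g_k'$ with $g_k'g_k^{-1}\in\varphi(K)$. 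Writing $g_k' = g \cdot \varphi(\ell_k)$ and $g_k = g\cdot \varphi(\ell_k')$ for suitable $\ell_k,\ell_k'\in K$, I would use the strong $K$-grading on $\cA^{(K)}$ to find, for each offending $K$-degree, finitely many elements of complementary $K$-degree multiplying to $1$, and insert these as ``partitions of unity'' between the factors to transport all the $K$-degrees to the neutral element. Because $G$ is abelian, such insertions commute past the graded pieces cleanly and the $G$-degree bookkeeping is additive, so the corrected products still sum to $1$ while each new factor has exactly $G$-degree $g^{-1}$ or $g$.

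The main obstacle I anticipate is precisely this last degree-correction step: arranging that after combining the $H$-level and $K$-level partitions of unity, the individual factors land on the exact degrees $g$ and $g^{-1}$ (not just on degrees congruent modulo $\varphi(K)$), while keeping the total sum equal to $1$. The cleanest way to organize this is probably to work directly with the equalities $\cA_h\cA_{h^{-1}}=\cA_e$ (strong $H$-grading) and $\cA^{(K)}_k \cA^{(K)}_{k^{-1}} = \cA^{(K)}_{e}$ (strong $K$-grading, where $\cA^{(K)}_k = \cA_{\varphi(k)}$), and to show their composite yields $\cA_g \cA_{g^{-1}} = \cA_e$, using that $\cA_e = \cA^{(K)}_{e_K}$ is contained in the common invariant subalgebra. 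Everything else—decomposing into homogeneous components, tracking degrees through $\pi$ and $\varphi$, and invoking \eqref{res}—is routine once the exact sequence is used to identify $\Ker\pi$ with $\im\varphi$.
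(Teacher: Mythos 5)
Your argument is correct and is essentially the proof this lemma rests on: the paper gives no in-text proof but cites Lemma~A.1 of \cite{Brz:Wey}, whose argument consists of exactly your three steps --- reduce strongness to $1\in\cA_{g^{-1}}\cA_g$ for each $g$, extract $G$-homogeneous pairs of product-degree $e$ from an $H$-level resolution of unity, then sandwich $K$-level resolutions of unity between the two factors of each pair to move their degrees from $g_k^{\pm1}$ to $g^{\pm1}$, abelianness and $\Ker\pi=\im\varphi$ making the degree bookkeeping close up. The only blemishes are notational: after discarding all terms except those whose product has $G$-degree exactly $e$, each surviving pair automatically has $g_k'=g_k$, so a single $\ell_k\in K$ with $g_k=g\varphi(\ell_k)$ suffices, and the inserted resolutions need not ``commute past'' anything --- they are simply multiplied in between $u_k$ and $v_k$, giving $u_kv_k=\sum_l(u_kw_l)(t_lv_k)$ with $u_kw_l\in\cA_{g^{-1}}$ and $t_lv_k\in\cA_{g}$.
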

Note that in the case of sequence \eqref{exact}, $|\cA|_H = \cA^{(K)}$ and $|\cA|_G = |\cA^{(K)}|_K$.

In this note we deal solely with algebras graded by Abelian groups which can be fitted into an exact sequence \eqref{exact}. Specifically, $K=G=\ZZ$, $H$ is the cyclic group ${\ZZ_N}$, $\pi: \ZZ\to {\ZZ_N}$ is the canonical projection $m\mapsto m \! \mod \! N$ and $\varphi: \ZZ\to \ZZ$ is given by $m\mapsto mN$.  $\ZZ$-gradings correspond to the circle actions, while ${\ZZ_N}$-gradings correspond to actions by the cyclic group of order $N$.

\section{Quantum weighted projective and lens spaces}\setcounter{equation}{0}

The algebra $\cO(S^{2n+1}_q)$ of coordinate functions on the quantum sphere is the unital complex $*$-algebra with generators $z_0,z_1,\ldots ,z_n$ subject to the following relations:
\begin{subequations}\label{sph}
\begin{gather}
z_iz_j = qz_jz_i \quad \mbox{for $i<j$}, \qquad z_iz^*_j = qz_j^*z_i \quad \mbox{for $i\neq j$}, \label{sph1}\\
z_iz_i^* = z_i^*z_i + (q^{-2}-1)\sum_{j=i+1}^n z_jz_j^*, \qquad \sum_{j=0}^n z_jz_j^*=1, \label{sph2}
\end{gather}
\end{subequations}
where $q$ is a real number, $q\in (0,1)$; see \cite{VakSob:alg}. For any $n+1$ (coprime) positive integers $\m: = (m_0,\ldots, m_n)$, one can define the $\ZZ$-grading on $\cO(S^{2n+1}_q)$ by setting
\begin{equation}\label{grading}
|z_i|_\ZZ = m_i, \qquad |z_i^*|_\ZZ = -m_i.
\end{equation}
This grading is equivalent to the weighted $U(1)$-action on the quantum sphere or to the $\cO(U(1)) = \CC [u,u^*]$ Hopf *-algebra coaction, $z_i\mapsto z_i\ot u^{m_i}$. The degree-zero part of $\cO(S^{2n+1}_q)$ is known as the coordinate algebra of the {\em (complex) quantum $n$-dimensional weighted projective space} and is denoted by $\cO(\WP^n_q(\m))$, i.e.\
$$
\cO(\WP^n_q(\m)) := |\cO(S^{2n+1}_q)|_\ZZ.
$$
We refer to the (co)action corresponding to this grading as the {\em defining (co)action} of the quantum weighted projective space. Fix a positive integer $N$. The algebra associated to the inclusion $\ZZ \to \ZZ$, $k\to kN$ is known as the coordinate algebra of the {\em quantum lens space} and is denoted by  $\cO(L^{2n+1}_q(N;\m))$, i.e.\
$$
\cO(L^{2n+1}_q(N;\m)) := \cO(S^{2n+1}_q)^{(\ZZ)}  = |\cO(S^{2n+1}_q)|_{{\ZZ_N}};
$$
see \cite{HonSzy:len}. From this it is clear that $\cO(L^{2n+1}_q(N;\m)) \cong \cO(L^{2n+1}_q(N;\l))$ if all the $m_i$ are congruent to the $l_i$ modulo $N$. As explained in Section~\ref{section.strong}, $\cO(L^{2n+1}_q(N;\m)$ is a $\ZZ$-graded algebra with the grading induced from that of $\cO(S^{2n+1}_q)$ by the relation that $x\in \cO(L^{2n+1}_q(N;\m))$ has degree $k$ provided it has the degree $kN$ in $\cO(S^{2n+1}_q)$. The coordinate algebra of the quantum weighted projective space $\cO(\WP^n_q(\m))$ is the zero-degree part of $\cO(L^{2n+1}_q(N;\m))$.

The coordinate algebras of {\em quantum real weighted $2n$-dimensional projective spaces} $\cO(\RR\PP^{2n}_q(\m))$ are defined in a similar way. The starting point is the algebra $\cO(\Sigma^{2n+1}_q)$, which is the coordinate algebra of the $S^1$-prolongated even-dimensional quantum sphere $S_q^{2n}$. It is a $*$-algebra generated by  $z_0,\ldots, z_n$ and central unitary $w$, which in addition to relations \eqref{sph} satisfy the condition that $z_n^*=wz_n$; see \cite{BrzZie:pri}. $\cO(\Sigma^{2n+1}_q)$ is a $\ZZ$-graded algebra with (weighted) grading \eqref{grading}. Note that this determines the degree of $w$, $|w|_\ZZ = -2m_n$. The coordinate algebra of the quantum real weighted projective space $\cO(\RR\PP^{2n}_q(\m))$ is defined as the degree-zero part of $\cO(\Sigma^{2n+1}_q)$. Finally, the subalgebra of $\cO(\Sigma^{2n+1}_q)$  corresponding to the inclusion $\ZZ \to \ZZ$, $k\to kN$ is denoted by  $\cO(\Sigma^{2n+1}_q(N;\m))$.

\section{Quantum principal bundles over quantum lens and weighted projective spaces.}\setcounter{equation}{0}
To study the nature of coordinate algebras of quantum projective and lens spaces as (degree zero part of) graded algebras only partial knowledge of generators is needed. In this section the following self-adjoint elements of $\cO(\WP^n_q(\m))$ or $\cO(\RR\PP^{2n}_q(\m))$ (depending on the context)  will be of particular importance:
\begin{equation}\label{ab}
a:= \sum_{i=1}^n z_iz_i^*, \qquad   b_i := z_iz_i^*, \qquad i=0,1,\ldots, n.
\end{equation}
Noting that $z_0z^*_0 = 1 -a$ and $z^*_0z_0 = 1 - q^{-2} a$, and that $z_0a = q^2 az_0$ one easily finds that for all $N\in \NN$,
\begin{equation}\label{z0z0}
z_0^Nz_0^{*N} = \prod_{s=0}^{N-1} (1- q^{2s} a), \qquad z_0^{*N}z_0^N = \prod_{s=1}^{N} (1- q^{-2s} a).
\end{equation}

\begin{proposition}\label{prop.lens}
For all positive integers $N, m_1, \ldots ,m_n$,  and non-negative $k\in \ZZ$,
\begin{zlist}
 \item $\cO(L^{2n+1}_q(N;Nk+1,m_1,\ldots, m_n))$ is the degree-zero subalgebra of the strongly ${\ZZ_N}$-graded algebra $\cO(S^{2n+1}_q)$;
  \item $\cO(\Sigma^{2n+1}_q(N;Nk+1,m_1,\ldots, m_n))$ is the degree-zero subalgebra of the strongly ${\ZZ_N}$-graded algebra $\cO(\Sigma^{2n+1}_q)$.
  \end{zlist}
\end{proposition}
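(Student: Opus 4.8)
The plan is to show that the $\ZZ_N$-grading induced on $\cA:=\cO(S^{2n+1}_q)$ by \eqref{grading} is strong, after which both assertions follow at once, since the algebras named are by definition the degree-zero parts. Write $\cA_j$ for the component of $\ZZ_N$-degree $j$. Because $m_0=Nk+1\equiv 1\pmod N$, the generator $z_0$ lies in $\cA_1$ and $z_0^*$ in $\cA_{-1}$, so the generator $1$ of $\ZZ_N$ is realised by the homogeneous element $z_0$. First I would reduce the criterion \eqref{res} to the single statement $1\in\cA_1\cA_{-1}$. The subspace $\cA_1\cA_{-1}$ is a two-sided ideal of $\cA_0=\cO(L^{2n+1}_q(N;\m))$ (as $\cA_0\cA_1\subseteq\cA_1$ and $\cA_{-1}\cA_0\subseteq\cA_{-1}$), so $1\in\cA_1\cA_{-1}$ already forces $\cA_1\cA_{-1}=\cA_0$; the elementary implication that $\cA_g\cA_{-g}=\cA_0$ together with $\cA_h\cA_{-h}=\cA_0$ gives $\cA_{g+h}\cA_{-(g+h)}=\cA_0$ then propagates this to $\cA_j\cA_{-j}=\cA_0$ for every $j\in\ZZ_N$, since $1$ generates $\ZZ_N$ as a monoid, and this is exactly strong grading.

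The construction of the required unit is where \eqref{z0z0} enters. Two products fall into $\cA_1\cA_{-1}$ purely for degree reasons. On one hand $z_0\in\cA_1$ and $z_0^*\in\cA_{-1}$, so \eqref{z0z0} places $P_1(a):=z_0z_0^*=1-a$ in $\cA_1\cA_{-1}$. On the other hand $-(N-1)\equiv 1\pmod N$, so $z_0^{*(N-1)}\in\cA_1$ and $z_0^{N-1}\in\cA_{-1}$, whence \eqref{z0z0} places $P_2(a):=z_0^{*(N-1)}z_0^{N-1}=\prod_{s=1}^{N-1}(1-q^{-2s}a)$ in $\cA_1\cA_{-1}$ as well. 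Since $\cA_1\cA_{-1}$ is an ideal of $\cA_0$ and $a\in\cA_0$, it also contains $f(a)P_1(a)$ and $g(a)P_2(a)$ for all $f,g\in\CC[a]$.

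The crux, and the only step carrying genuine content, is a coprimality argument in $\CC[a]$ that is available precisely because $q\neq 1$. The zeros of $P_1$ and $P_2$ are $a=1$ and $a=q^{2s}$ for $1\le s\le N-1$, and since $q\in(0,1)$ these sets are disjoint, so $\gcd(P_1,P_2)=1$ and B\'ezout produces $f,g\in\CC[a]$ with $f(a)P_1(a)+g(a)P_2(a)=1$. As both summands lie in the ideal $\cA_1\cA_{-1}$, this yields $1\in\cA_1\cA_{-1}$ and proves (1). This is the algebraic shadow of the phenomenon described in the introduction: at $q=1$ every factor of $P_2$ degenerates to $1-a$, so $P_1$ and $P_2$ acquire the common factor $1-a$ and the B\'ezout coefficients develop the pole responsible for the classical non-freeness. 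Finally, (2) requires no new idea: $\cO(\Sigma^{2n+1}_q)$ satisfies the same relations \eqref{sph} together with the central unitary $w$, so $z_0$, $z_0^*$ and $a$ obey $z_0z_0^*=1-a$ and \eqref{z0z0} verbatim, the degree bookkeeping is identical, and $w$ never enters the partition of unity; the same $P_1,P_2$ settle the claim.
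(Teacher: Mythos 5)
Your proof is correct and follows essentially the same route as the paper: both reduce strongness of the $\ZZ_N$-grading to a resolution of identity in degrees $\pm 1$ and produce it via B\'ezout's lemma applied to coprime polynomials in $a$ obtained from \eqref{z0z0}, the coprimality resting on $q\neq 1$. The only (immaterial) differences are that you exhibit $1\in\cA_1\cA_{-1}$ using $z_0z_0^*$ and $z_0^{*(N-1)}z_0^{N-1}$, whereas the paper exhibits $1\in\cA_{N-1}\cA_1$ using $z_0^{N-1}z_0^{*(N-1)}$ and $z_0^*z_0$, and that you re-derive the reduction to a single generator of $\ZZ_N$ instead of citing \cite[Section~A.I.3.2]{NasVan:gra}.
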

\begin{proof}
The $*$-compatible (meaning such that degree of a homogenous element is opposite to the degree of its conjugate) grading of $\cO(S^{2n+1}_q)$ or $\cO(\Sigma^{2n+1}_q)$ is given by 
$$
|z_0|_{{\ZZ_N}} = 1, \quad |z_i|_{{\ZZ_N}} = m_i \!\!\mod\! N, \qquad i=1,\ldots, n.
$$
The group ${\ZZ_N}$ is generated by 1,  hence we need to find elements $\alpha_r$, $\beta_r$ such that 
\begin{equation}\label{strong.lens}
|\alpha_r|_{{\ZZ_N}} = N-1, \quad  |\beta_r|_{{\ZZ_N}} = 1 \quad \mbox{and} \quad \sum_r \alpha_r \beta_r =1.
\end{equation}
Since $z_0^{N-1}z_0^{*N-1}$ and $z_0^*z_0$ are polynomials in $a$ with no common roots (see equations \eqref{z0z0}), by the B\'ezout lemma there exist polynomials $\alpha(a)$ and $\beta(a)$ such that
$$
\alpha(a)z_0^{N-1}z_0^{*N-1} + \beta(a)z_0^*z_0 =1.
$$
Hence 
$$
\alpha_0 = \alpha(a) z_0^{N-1}, \quad \alpha_1 = \beta(a) z_0^*, \quad \beta_0 = z_0^{*N-1}, \quad \beta_1 = z_0,
$$
satisfy all the requirements \eqref{strong.lens}.
\end{proof}

The following result, although not stated in this generality, is already contained in \cite{DAnLan:wei}.

\begin{proposition}\label{prop.weight}
For all $\m =(m_0, m_1, \dots, m_n)$, the $*$-algebras
$\cO(L^{2n+1}_q(\prod_{i}m_i;\m))$ and $\cO(\Sigma^{2n+1}_q(\prod_{i}m_i;\m))$  are strongly $\ZZ$-graded. In other words, the $2n+1$-dimensional quantum lens space $L^{2n+1}_q(\prod_{i}m_i;\m)$ is a circle principal bundle over the quantum weighted projective space $\WP^n_q(\m)$, while $\Sigma^{2n+1}_q(\prod_{i}m_i;\m)$ is such a bundle over $\RR\PP^{2n}_q(\m)$.
\end{proposition}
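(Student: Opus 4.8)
The plan is to verify the strong‑grading criterion \eqref{res} directly for the generator $1$ of $\ZZ$, by exhibiting an explicit Bézout‑type partition of unity built from the pure powers $z_i^{N/m_i}$. Write $N=\prod_{i=0}^n m_i$ and $M_i:=N/m_i=\prod_{j\neq i}m_j$, which is a positive integer precisely because $N$ is the product of all the weights; this is the one place the hypothesis $N=\prod_i m_i$ is used, and it is exactly what makes $z_i^{M_i}$ a homogeneous element of $\ZZ$‑degree $m_iM_i=N$, i.e.\ of degree $1$ in the lens (resp.\ $\Sigma$) grading. By \eqref{grading} the elements $z_i^{M_i},\,z_i^{*M_i}$ then lie in the degree $\pm N$ subspaces of $\cO(S^{2n+1}_q)$, hence in $\cO(L^{2n+1}_q(N;\m))$ with lens‑degrees $\pm1$.

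First I would isolate the commutative subalgebra in which everything happens. Put $c_j:=\sum_{l\ge j}z_lz_l^*$, so $c_0=1$, $c_{n+1}=0$, and by \eqref{sph2} one has $z_iz_i^*=c_i-c_{i+1}$ and $z_i^*z_i=c_i-q^{-2}c_{i+1}$. Using \eqref{sph1} one checks that the $c_j$ commute with one another and that $z_ic_{i+1}=q^2c_{i+1}z_i$; then the associativity identity $z_i(z_i^*z_i)=(z_iz_i^*)z_i$, rewritten with these facts, collapses to $z_ic_i=c_iz_i$. Granting that $z_i$ commutes with $c_i$ and $q^2$‑commutes with $c_{i+1}$, a one‑line induction on $M$ (the case $i=0$, $c_0=1$, $c_1=a$ being exactly \eqref{z0z0}) yields the product formula
\begin{equation}\label{Pi}
P_i:=z_i^{*M_i}z_i^{M_i}=\prod_{s=1}^{M_i}\bigl(c_i-q^{-2s}c_{i+1}\bigr)\in\CC[c_1,\dots,c_n].
\end{equation}
Establishing \eqref{Pi} uniformly in $i$ is the one genuinely computational point, since the relations \eqref{sph} are not symmetric in the indices; this is the step I expect to be the main obstacle, everything else being formal.

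With \eqref{Pi} in hand the conclusion is a Nullstellensatz argument. The polynomials $P_0,\dots,P_n$ have no common zero in $\CC^n$ (coordinates $c_1,\dots,c_n$, with $c_0=1$, $c_{n+1}=0$): at a common zero $P_n=c_n^{M_n}$ forces $c_n=0$, and feeding $c_{i+1}=0$ into \eqref{Pi} gives $P_i=c_i^{M_i}$, so $c_i=0$ cascades down to $c_1=0$; but then $P_0=\prod_{s=1}^{M_0}(1-q^{-2s}c_1)=1\neq0$, a contradiction. As $\CC[c_1,\dots,c_n]$ is a polynomial ring, the Nullstellensatz produces $f_0,\dots,f_n\in\CC[c_1,\dots,c_n]$ with $\sum_i f_iP_i=1$, an identity valid in $\cO(S^{2n+1}_q)$. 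Setting $\beta^{(i)}:=z_i^{M_i}$ (degree $+1$) and $\alpha^{(i)}:=f_iz_i^{*M_i}$ (degree $-1$) gives $\sum_i\alpha^{(i)}\beta^{(i)}=\sum_i f_iP_i=1$, which is condition \eqref{res} for the generator $1$ of $\ZZ$. As in the proof of Proposition~\ref{prop.lens}, checking the single generator suffices, so $\cO(L^{2n+1}_q(N;\m))$ is strongly $\ZZ$‑graded, i.e.\ $L^{2n+1}_q(\prod_i m_i;\m)$ is a principal circle bundle over $\WP^n_q(\m)$.

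For the real case I would run the identical argument inside $\cO(\Sigma^{2n+1}_q)$: the $z_i$ obey the same relations \eqref{sph}, the $c_j$ are defined by the same formula, and the central unitary $w$ (with $z_n^*=wz_n$) commutes with every $z_i$, so it affects neither the commutation facts nor \eqref{Pi} (for $i=n$ one still gets $P_n=c_n^{M_n}$, now also visible from $c_n=z_nz_n^*=wz_n^2$). Since $\alpha^{(i)},\beta^{(i)}$ never involve $w$, they witness strong $\ZZ$‑grading of $\cO(\Sigma^{2n+1}_q(\prod_i m_i;\m))$, exhibiting it as a principal circle bundle over $\RR\PP^{2n}_q(\m)$. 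One could instead avoid \eqref{Pi} for $i>0$ by iterating Proposition~\ref{prop.lens} through the tower of Lemma~\ref{lemma.tower}, but the direct partition of unity above is shorter and makes the role of the hypothesis $N=\prod_i m_i$ transparent.
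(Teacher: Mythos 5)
Your partition of unity is built correctly, and it is essentially the paper's own mechanism made self-contained: the paper takes the two identities of \eqref{weigh.res} from \cite[Proposition~7.1]{DAnLan:wei} with $l_i=\prod_{j\neq i}m_j$ and merely reads off degrees, while you re-prove one of them via the product formula $z_i^{*M_i}z_i^{M_i}=\prod_{s=1}^{M_i}(c_i-q^{-2s}c_{i+1})$ and the weak Nullstellensatz (your commutation facts, the no-common-zero cascade, and the substitution of the commuting $c_j$ are all fine). The genuine gap is the final step: you verify condition \eqref{res} only for the generator $1\in\ZZ$, i.e.\ you produce $1\in\cA_{-1}\cA_{1}$ in the lens grading, and assert that ``checking the single generator suffices'' as in Proposition~\ref{prop.lens}. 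That assertion is valid for $\ZZ_N$ but false for $\ZZ$. The set $S=\{g:1\in\cA_{g^{-1}}\cA_g\}$ is closed under products, hence is a sub\emph{monoid} of the grading group, but it need not be closed under inversion. For $\ZZ_N$ one has $-1=(N-1)\cdot 1$, which lies in the submonoid generated by $1$, and this is why Proposition~\ref{prop.lens} can get away with a single resolution; for $\ZZ$, knowing $1\in S$ yields only $\NN\subseteq S$. Concretely, the algebraic Toeplitz algebra $\CC\langle t,t^*\rangle/(t^*t-1)$ graded by $|t|_\ZZ=1$ satisfies $1=t^*t\in\cA_{-1}\cA_1$, yet $\cA_1\cA_{-1}=\mathrm{span}\{t^kt^{*k}:k\geq1\}\not\ni 1$, so it is not strongly graded. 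Note also that applying $*$ to your identity $\sum_i f_iz_i^{*M_i}z_i^{M_i}=1$ returns an identity of the same one-sided type (degree $-1$ times degree $+1$), so the $*$-structure cannot supply the missing condition. This is precisely why the paper invokes \emph{both} equations of \eqref{weigh.res}: one is the resolution for the generator $-1$, the other for $+1$.

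The repair stays entirely within your framework. The mirror formula $z_i^{M_i}z_i^{*M_i}=\prod_{s=0}^{M_i-1}(c_i-q^{2s}c_{i+1})$ (for $i=0$ this is the first identity of \eqref{z0z0}) follows by the same induction using $z_ic_i=c_iz_i$ and $c_{i+1}z_i^*=q^{2}z_i^*c_{i+1}$; the same cascade ($c_{n+1}=0$ forces $c_n=0$, down to $c_1=0$, whence the $i=0$ polynomial equals $1$) shows these $n+1$ polynomials have no common zero; and a second application of the Nullstellensatz gives polynomials $g_0,\ldots,g_n$ in the $c_j$ with $\sum_i g_iz_i^{M_i}z_i^{*M_i}=1$. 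Since $g_iz_i^{M_i}$ has lens degree $+1$ and $z_i^{*M_i}$ has lens degree $-1$, this is the resolution \eqref{res} for the generator $-1$, and with it your argument proves both the complex and the real statement. (Your parenthetical alternative of iterating Proposition~\ref{prop.lens} through Lemma~\ref{lemma.tower} would not work for general $\m$, since that proposition requires the first weight to be congruent to $1$ modulo $N$.)
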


\begin{proof}
By \cite[Proposition~7.1]{DAnLan:wei}, for all positive integers $l_0, \ldots ,l_1$, there exist polynomials $A_0, \ldots, A_n$, $B_0, \ldots, B_n$ in $b_0, \ldots, b_n$ such that
\begin{equation}\label{weigh.res}
\sum_{i=0}^n A_iz_i^{l_i}z_i^{*l_i} = 1 \quad \mbox{and} \quad \sum_{i=0}^n B_iz_i^{*l_i}z_i^{l_i} = 1.
\end{equation}
Setting $l_i = \prod_{j\neq i}m_i$, and observing that $A_i, B_i$ are elements of  $\cO(\WP^n_q(\m))$ (resp.\  $\cO(\RR\PP^{2n}_q(\m))$), one easily finds that the $z_i^{l_i}$ and $A_iz_i^{l_i}$ are elements of $\cO(L^{2n+1}_q(\prod_{i}m_i;\m))$ (resp.\  $\cO(\Sigma^{2n+1}_q(\prod_{i}m_i;\m))$) of the $\ZZ$-degree 1, while the $z_i^{*l_i}$ and $B_iz_i^{*l_i}$   are elements of $\cO(L^{2n+1}_q(\prod_{i}m_i;\m))$ (resp.\  $\cO(\Sigma^{2n+1}_q(\prod_{i}m_i;\m))$) of the $\ZZ$-degree $-1$. Hence the first equation \eqref{weigh.res} provides one with the required resolution of identity \eqref{res} for the generator $-1\in \ZZ$, and the second of equations \eqref{weigh.res} gives such a resolution for  $1\in \ZZ$.
\end{proof}

\begin{theorem}\label{thm.free}
For all positive integers $m_1, \ldots, m_n$,
\begin{zlist}
\item The circle group action on the quantum sphere $S^{2n+1}_q$ that defines the quantum weighted projective space $\WP^n_q(1,m_1,\ldots, m_n)$ is free.
\item The circle group action on the quantum prolongated sphere $\Sigma^{2n+1}_q$ that defines the quantum weighted projective space $\RR\PP^{2n}_q(1,m_1,\ldots, m_n)$ is free.
\end{zlist}
\end{theorem}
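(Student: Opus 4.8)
The plan is to translate freeness into an algebraic statement and then chain together the two preceding propositions via the tower lemma. By the correspondence between principal circle coactions and strong $\ZZ$-gradings recalled in Section~\ref{section.strong}, the defining action is free precisely when the weighted $\ZZ$-grading \eqref{grading} on $\cO(S^{2n+1}_q)$ (resp.\ $\cO(\Sigma^{2n+1}_q)$) with $\m = (1, m_1, \ldots, m_n)$ is strong. I would therefore aim to prove this strongness directly.

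Concretely, set $N := \prod_{i=0}^n m_i$, which equals $\prod_{i=1}^n m_i$ because $m_0 = 1$, and apply Lemma~\ref{lemma.tower} to the exact sequence \eqref{exact} with $K = G = \ZZ$, $H = \ZZ_N$, $\varphi(k) = kN$ and $\pi$ the reduction modulo $N$, taking $\cA = \cO(S^{2n+1}_q)$ with the grading \eqref{grading} as the $\ZZ$-graded algebra. The lemma reduces strongness of this $\ZZ$-grading to two separate conditions: strongness of the induced $\ZZ_N$-grading on $\cA$, and strongness of the induced $\ZZ$-grading on $\cA^{(\ZZ)} = \cO(L^{2n+1}_q(N;\m))$.

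The first condition is supplied by Proposition~\ref{prop.lens}: the induced $\ZZ_N$-grading carries weights $(1, m_1 \bmod N, \ldots, m_n \bmod N)$, and since $m_0 = 1 = N\cdot 0 + 1$ this is exactly the case $Nk+1$ with $k = 0$. The second condition is Proposition~\ref{prop.weight}, whose hypothesis is precisely that the degree shift be by $N = \prod_i m_i$. Part (2) then follows by the identical argument with $\cO(\Sigma^{2n+1}_q)$ in place of $\cO(S^{2n+1}_q)$, invoking the second items of both propositions.

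The only real obstacle is the simultaneous alignment of the modulus $N$ appearing in the two propositions: Proposition~\ref{prop.weight} forces $N = \prod_i m_i$, whereas Proposition~\ref{prop.lens} requires the first weight to be congruent to $1$ modulo $N$. It is exactly the hypothesis $m_0 = 1$ that reconciles the two (since $1 \equiv 1 \pmod N$), and this is where one sees why the statement is restricted to first weight one: for a general $m_0$ the induced $\ZZ_N$-grading need not be strong, so the tower lemma cannot be applied in the same way.
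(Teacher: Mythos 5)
Your proposal is correct and follows essentially the same route as the paper's own proof: reduce freeness to strongness of the weighted $\ZZ$-grading, apply Lemma~\ref{lemma.tower} to the sequence $0\to\ZZ\to\ZZ\to\ZZ_N\to 0$ with $N=\prod_i m_i$, and supply the two strongness hypotheses via Proposition~\ref{prop.lens} (with $k=0$, using $m_0=1$) and Proposition~\ref{prop.weight}. Your closing remark on why $m_0=1$ is exactly what aligns the two propositions is a useful observation, but the argument itself is the paper's argument.
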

\begin{proof}
In both cases we consider the $\ZZ$-grading  given by $|z_i|_\ZZ =m_i$, $i=1, \ldots , n$, $|z_0|_\ZZ =1$, and apply Lemma~\ref{lemma.tower} to the short exact sequence
$$
\xymatrix{ 0 \ar[r] & \ZZ \ar[r]^\varphi & \ZZ \ar[r]^\pi & {\ZZ_N} \ar[r] & 0,}
$$
where $N=\prod_i m_i$ and $\varphi$ is the inclusion map $k\mapsto Nk$. The induced ${\ZZ_N}$-gradings on $\cO(S^{2n+1}_q)$ and $\cO(\Sigma^{2n+1}_q)$ are strong by Proposition~\ref{prop.lens}, while the induced $\ZZ$-grading on $\cO(L^{2n+1}_q(N;1,m_1,\ldots, m_n))$  and  $\cO(\Sigma^{2n+1}_q(N;1,m_1,\ldots, m_n))$ are strong by Proposition~\ref{prop.weight}. Hence the  $\ZZ$-gradings of $\cO(S^{2n+1}_q)$ and  $\cO(\Sigma^{2n+1}_q)$ that define quantum weighted projective spaces are strong by Lemma~\ref{lemma.tower}, i.e.\ the corresponding actions are free in the noncommutative sense.
\end{proof}

The results of this section indicate smoothness of quantum spaces discussed here which is in a marked contrast with the classical situation. An algebra is said to be {\em homologically smooth} if it admits a resolution of finite length by its finitely generated projective bimodules; see e.g.\ \cite[Erratum]{Van:rel}.  Homological smoothness of algebras discussed in this paper can be argued thus: The enveloping algebras  of coordinate algebras of quantum unitary groups are Ore localizations of iterated skew polynomial algebras, hence they  are Noetherian and of finite global dimension (see e.g.\ \cite[Chapters~I.2  \& II.9]{BroGoo:lec}). Since quantum odd dimensional spheres are homogeneous spaces of quantum unitary groups,  the enveloping algebras  of $\cO(S_q^{2n+1})$ are also Noetherian and of finite global dimension by \cite[Corollary~6]{Kra:Hoc}. The strongness of the gradings that define $\cO(L^{2n+1}_q(N;Nk+1,m_1,\ldots, m_n))$ and $\cO(\WP^n_q(1,m_1,\ldots, m_n))$ imply that these algebras are homologically smooth; see \cite[Criterion~1]{Brz:smo}. The existence of surjective algebra homomorphisms $\cO(S_q^{2n+1})\to \cO(S_q^{2n})$ implies that also enveloping algebras of $\cO(S_q^{2n})$ are Noetherian of finite global dimension. The algebras $\cO(\Sigma^{2n+1}_q)$ are defined in \cite{BrzZie:pri} as degree-zero parts of a strong $\ZZ_2$-grading on the Laurent polynomial extension of  $\cO(S_q^{2n})$, so the quantum coordinate algebras  $\cO(\Sigma^{2n+1}_q)$, $\cO(\Sigma^{2n+1}_q(N;Nk+1,m_1,\ldots, m_n))$ and $\cO(\RR\PP^{2n}_q(1,m_1,\ldots, m_n))$ are homologically smooth.

\section{The $K_1$-groups of the quantum lens spaces}\setcounter{equation}{0}
The interpretation of $C^*$-algebras of continuous functions on quantum spheres as graph $C^*$-algebras in \cite{HonSzy:sph} allows one to find the formula for the $K_0$-group and effectively compute the $K_1$-group for quotients of quantum spheres by finite group actions; see \cite{Cri:cor}. For the ${\ZZ_N}$ action defining $L^{2n+1}_q(N;\m)$ this computation is carried out in  \cite{HonSzy:len}, provided all the $m_i$ are coprime with $N$, but the arguments used there can be easily extended to the general case, thus leading to the following

\begin{proposition}\label{prop.K}
For all $N$, $\m = (m_0, \ldots, m_n)$, where all the $m_i \in \{0,\ldots, N - 1\}$ are pairwise coprime,
\begin{equation}\label{K1}
K_1(C(L^{2n+1}_q(N;\m))) = \ZZ^{\sum_{i=0}^n \gcd(N,m_i) -n}.
\end{equation}
\end{proposition}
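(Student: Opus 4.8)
The plan is to exhibit $C(L^{2n+1}_q(N;\m))$ as a graph $C^*$-algebra and read $K_1$ off the vertex matrix, following the strategy of Hong and Szymański. Recall from \cite{HonSzy:sph} that $C(S^{2n+1}_q)$ is the graph $C^*$-algebra of the graph with vertices $0,1,\ldots,n$, a loop at every vertex and one edge $i\to j$ for each pair $i<j$; its vertex matrix is the $(n+1)\times(n+1)$ upper-triangular all-ones matrix $A$. The defining $\ZZ_N$-action $z_i\mapsto\zeta^{m_i}z_i$, with $\zeta$ a primitive $N$-th root of unity, is implemented on the graph level by the $\ZZ_N$-valued cocycle sending the loop at vertex $i$ to $m_i\bmod N$, so that $C(L^{2n+1}_q(N;\m))$ is the $C^*$-algebra of the associated skew-product graph with $(n+1)N$ vertices indexed by $\{0,\ldots,n\}\times\ZZ_N$; this is precisely the graph appearing in \cite{HonSzy:len}. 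By the standard description of the $K$-theory of graph algebras (see \cite{HonSzy:len,Cri:cor}), $K_1$ is the kernel of the integral map $1-M^t$, where $M$ is the vertex matrix of the skew-product graph. In particular $K_1$ is free abelian, so it suffices to compute the rank $\dim_\CC\ker(1-M^t)$.

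First I would diagonalise the $\ZZ_N$-symmetry. Shifting the $\ZZ_N$-coordinate makes $M$ block-circulant, so over $\CC$ it decomposes along the characters of $\ZZ_N$ into $(n+1)\times(n+1)$ blocks $M_k$, $k\in\ZZ_N$, where $M_k$ is the matrix $A$ with each edge weighted by the value of the $k$-th character at its cocycle label. Thus $M_k$ is again upper triangular, with diagonal entries $(M_k)_{ii}=\zeta^{km_i}$, and $\mathrm{rank}\,K_1=\sum_{k=0}^{N-1}\dim_\CC\ker(1-M_k)$. The problem is reduced to the nullities of the matrices $1-M_k$, whose diagonal is $(1-\zeta^{km_0},\ldots,1-\zeta^{km_n})$; write $Z_k=\{i: N\mid km_i\}$ for the set of vanishing diagonal slots.

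The computation then splits into two cases. For $k=0$ the block is $1-A$, which reproduces the known value $K_1(C(S^{2n+1}_q))=\ZZ$, i.e.\ nullity $1$. For $k\neq0$ the decisive point is that pairwise coprimality of the weights forces $|Z_k|\le1$: if $N\mid km_i$ and $N\mid km_j$ with $i\neq j$, then $N/\gcd(N,m_i)$ and $N/\gcd(N,m_j)$ both divide $k$, and since $\gcd(\gcd(N,m_i),\gcd(N,m_j))$ divides $\gcd(m_i,m_j)=1$, the least common multiple of these two divisors equals $N/\gcd(\gcd(N,m_i),\gcd(N,m_j))=N$, whence $N\mid k$, a contradiction. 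When $|Z_k|\le1$ the upper-triangular recursion for $1-M_k$, solved from the highest index downwards, forces every variable lying above the unique vanishing slot to be zero through its invertible pivot, so the single homogeneous constraint produced by that slot is vacuous; hence $\dim_\CC\ker(1-M_k)=|Z_k|=\#\{i: N\mid km_i\}$.

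Finally I would sum the contributions. Interchanging summation, $\sum_{k=0}^{N-1}\#\{i: N\mid km_i\}=\sum_{i=0}^n\#\{k: N\mid km_i\}=\sum_{i=0}^n\gcd(N,m_i)$, of which the $k=0$ term accounts for $n+1$. Replacing that $k=0$ contribution $n+1$ by its true value $1$ gives $\mathrm{rank}\,K_1=1+\big(\sum_{i=0}^n\gcd(N,m_i)-(n+1)\big)=\sum_{i=0}^n\gcd(N,m_i)-n$, which is \eqref{K1}. The main obstacle is the case $k\neq0$: everything hinges on the number-theoretic fact that pairwise coprimality confines each nontrivial character to at most one vanishing diagonal entry, since this is exactly what makes the off-diagonal, cocycle-dependent part of $1-M_k$ irrelevant to the nullity and lets the coprime-weight argument of \cite{HonSzy:len} extend verbatim. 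A secondary, bookkeeping point is to confirm that both the graph realization and the graph $K$-theory formula persist when the $m_i$ are not assumed coprime to $N$.
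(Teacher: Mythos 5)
Your proposal is correct, and its reduction step is the same as the paper's: both realize $C(S_q^{2n+1})$ as the graph algebra of $L_{2n+1}$, encode the $\ZZ_N$-grading as an edge labeling, pass to the skew-product (crossed-product) graph, use Crisp's corner theorem \cite{Cri:cor} to transfer $K$-theory, and read $K_1$ off as the kernel of the integer matrix $1-M^t$ via \cite{RaeSzy:Cun}. One phrasing needs fixing: $C(L^{2n+1}_q(N;\m))$ is a \emph{full corner} of the skew-product graph algebra, not that algebra itself, so the identification of $K$-groups goes through Morita equivalence \cite{RaeWil:Mor}; this is also what settles your final ``bookkeeping point,'' since Crisp's theorem requires no coprimality of the $m_i$ with $N$. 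Where you genuinely diverge is the kernel computation. The paper works integrally, following the elimination scheme of \cite[Lemma~2.1]{HonSzy:len}: index by index it extracts the periodicity constraints modulo $n_i=\gcd(N,m_i)$ and the summation constraints $\sum_s\lambda_i^s=0$, exhibiting the kernel as $\ZZ^{n_0-1}\oplus\cdots\oplus\ZZ^{n_{n-1}-1}\oplus\ZZ^{n_n}$. You instead note that the kernel is automatically free (a subgroup of $\ZZ^{N(n+1)}$), so only its rank matters, and compute that rank over $\CC$ by Fourier-decomposing the block-circulant matrix along characters of $\ZZ_N$ into upper-triangular blocks $1-M_k$ with diagonals $1-\zeta^{km_i}$; the decisive lemma is that pairwise coprimality forces at most one vanishing diagonal entry when $k\neq 0$ (even more directly than your lcm argument: $N$ divides $km_i$ and $km_j$, hence divides $\gcd(km_i,km_j)=k\gcd(m_i,m_j)=k$), after which a double count gives $1+\sum_i\gcd(N,m_i)-(n+1)$. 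Your route is cleaner and isolates exactly where pairwise coprimality enters, whereas the paper's recursion is notationally heavier; the trade-off is that a complex rank count sees nothing of torsion, so unlike the paper's integral elimination it gives no purchase on $K_0(C(L^{2n+1}_q(N;\m)))=\coker\,\Phi$, while for $K_1$ the two methods are equally conclusive.
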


\begin{proof}
Recall from \cite{HonSzy:sph} that the algebra of continuous functions on the quantum odd-dimensional sphere, $C(S_q^{2n+1})$, is isomorphic to the graph $C^*$-algebra associated to the finite graph $L_{2n+1}$ defined as 
follows. $L_{2n+1}$ has $n+1$ vertices $v_0, v_1, \ldots, v_n$ and $(n+1)(n+2)/2$ edges $e_{ij}$, $i=0,\ldots ,n$, $j = i, \ldots ,n$, with $v_i$ the source and $v_j$ the range of $e_{ij}$. As explained in \cite[Lemma~2.3]{KalQui:skew}, the grading of a graph $C^*$-algebra by a finite group $G$ or, equivalently, the coaction by a finite group algebra corresponds to a labeling of the graph, i.e.\ to a  function that to each edge assigns an element of $G$. In the case of the grading of $C(S_q^{2n+1})$ that yields $C(L^{2n+1}_q(N;\m))$, the corresponding labeling is $c: e_{ij}\mapsto m_j \! \mod \! N$. Associated to this labeling is the crossed product graph $L_{2n+1}\times_c {\ZZ_N}$ with edges $(e_{ij},m)$ and vertices $(v_i,m)$, where $m=\{0,\ldots ,N-1\}$. The vertex $(v_i, m-m_i)$ is the source of $(e_{ij},m)$, while its range is $(v_j,m)$. By \cite[Theorem~4.6]{Cri:cor}, the zero degree part of $C(S_q^{2n+1})$, i.e.\ $C(L^{2n+1}_q(N;\m))$, is the full corner of the $C^*$-algebra $C(L_{2n+1}\times_c {\ZZ_N})$ associated to $L_{2n+1}\times_c {\ZZ_N}$. Hence, by \cite[Chapter~3]{RaeWil:Mor}, the $K$-groups of $C(L^{2n+1}_q(N;\m))$ are isomorphic to that of $C(L_{2n+1}\times_c {\ZZ_N})$.  In view of \cite[Theorem~3.2]{RaeSzy:Cun}, the latter can be read off the following homomorphism of Abelian groups:
$$
\Phi: \ZZ^{N (n+1)} \to \ZZ^{N (n+1)}, \quad \lambda_i^m \mapsto  \sum_{j=0}^i\lambda_j^{(m-m_j)\!\!\!\mod\! N} -  \lambda_i^m, \quad m=0,\ldots, N-1,\ i = 0,\ldots , n.
$$
So, specifically,
$$
K_0(C(L^{2n+1}_q(N;\m))) = \coker\, \Phi, \qquad K_1(C(L^{2n+1}_q(N;\m))) = \ker \Phi.
$$
We calculate the kernel of $\Phi$ by slightly amending the method of the proof of \cite[Lemma~2.1]{HonSzy:len}. Set $n_i:= \gcd(N,m_i)$. For $i=0$, the condition
\begin{equation}\label{kernel}
\sum_{j=0}^i\lambda_j^{(m-m_j)\!\!\!\mod\! N} -  \lambda_i^m = 0, \qquad m=0,\ldots, N-1,\ i = 0,\ldots , n,
\end{equation}
implies that 
$
\lambda_0^{(kn_0 +l_0)\!\!\!\mod\! N} = \lambda_0^{l_0}$, for all $l_0 = 0,\ldots, n_0 -1$.
So there are at most $n_0$ independent $\lambda_0^m$. Summing up  the equality \eqref{kernel} with $i=1$ over all the $m\in \ZZ_N$, 
yields the constraint
$
\sum_{s=0}^{n_0} \lambda_0^s = 0.
$
Furthermore, \eqref{kernel} with $i=1$ implies that
$$
\lambda_1^{kn_1 +l_1} = \lambda_1^{l_1} + f_{kn_1 +l_1}(\lambda_0^0,\ldots, \lambda_0^{n_0-1}), \qquad l_1 = 0,\ldots, n_1 -1.
$$
Summing up  the equality \eqref{kernel} for $\lambda_2^m$ over all the $m\in \ZZ_N$ will yields the constraint $
\sum_{s=0}^{n_1} \lambda_1^s = 0$. Repeating these arguments one therefore finds that
$$
K_1(C(L^{2n+1}_q(N;\m))) = \ker \Phi = \ZZ^{n_0-1} \oplus \ldots \oplus \ZZ^{n_{n-1}-1}\oplus \ZZ^{n_n}
 = \ZZ^{\sum_{i=0}^n \gcd(N,m_i) -n},
$$
as required.
\end{proof}

\section{$K$-theory  of quantum multidimensional teardrops}\setcounter{equation}{0}
\subsection{Quantum complex teardrops.}
To achieve a full description of generators of algebras 
$\cO(L^{2n+1}_q(N;\m))$ or
$\cO(\WP^n_q(\m))$
 is hard, although such a description can be given in some special cases. For example in  \cite{DAnLan:wei} the full list of generators is given for the quantum lens spaces and complex weighted projective spaces with weights of the form $m_i = \prod_{j\neq i} l_j$, where $l_0, \ldots , l_n$ are pairwise coprime integers. The following lemma  goes in a different direction.
\begin{lemma}\label{lemma.gen}
\begin{zlist}
\item As a $*$-algebra $\cO(\WP^n_q(1,\ldots, 1, m))$ is generated by the following elements:
\begin{equation}\label{bc}
b_{i,j} := z_iz_j^* \quad \mbox{and} \quad c_{\l} := z_0^{l_0} z_1^{l_1}\cdots  z_{n-1}^{l_{n-1}}z_n^*,
\end{equation}
where $i\leq j$, $i,j = 0,\ldots, n-1$,  $\l=(l_0,\ldots ,l_{n-1})\in \NN^n$ and $\sum_{i=0}^{n-1} l_i = m$.
\item As a $*$-algebra $\cO(L^{2n+1}_q(m;1,\ldots, 1, m))$ is generated by the following elements:
\begin{equation}\label{bc.lens}
z_n, \quad b_{i,j} := z_iz_j^* \quad \mbox{and} \quad \tilde{c}_{\l} := z_0^{l_0} z_1^{l_1}\cdots  z_{n-1}^{l_{n-1}},
\end{equation}
where $i\leq j$, $i,j = 0,\ldots, n-1$,  $\l=(l_0,\ldots ,l_{n-1})\in \NN^n$ and $\sum_{i=0}^{n-1} l_i = m$.
\end{zlist} 
\end{lemma}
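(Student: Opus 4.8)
The plan is to identify the generators by a weight/degree-counting argument within the $\ZZ^{n+1}$-multigrading of $\cO(S^{2n+1}_q)$, and then verify closure under the algebra operations. Recall that $\cO(S^{2n+1}_q)$ carries a natural $\ZZ^{n+1}$-grading in which $z_i$ has multidegree $\mathbf{e}_i$ and $z_i^*$ has multidegree $-\mathbf{e}_i$. A PBW-type basis of $\cO(S^{2n+1}_q)$ is given by ordered monomials $z_0^{a_0}\cdots z_n^{a_n}\,z_0^{*b_0}\cdots z_n^{*b_n}$ (using the commutation relations \eqref{sph1} to reorder, and \eqref{sph2} to eliminate, say, $z_i^*z_i$ in favour of $a$ and lower terms), and I would first fix such a normal form. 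Under the weighted grading $|z_i|_\ZZ = m_i$ with $\mathbf{m}=(1,\ldots,1,m)$, an element of $\cO(\WP^n_q(1,\ldots,1,m))$ is a combination of normal-form monomials whose total $\ZZ$-degree vanishes, i.e.\ $\sum_{i=0}^{n-1}(a_i-b_i) + m(a_n-b_n)=0$.

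First I would treat the subalgebra generated by $z_0,\ldots,z_{n-1}$ and their adjoints, whose defining relations are exactly those of $\cO(S^{2n-1}_q)$ sitting inside $\cO(S^{2n+1}_q)$; here every degree-zero element (for the unweighted grading on the first $n$ coordinates) is a polynomial in the $b_{i,j}=z_iz_j^*$ with $i\le j$, since $z_jz_i^* = q^{\pm 1} b_{i,j}^*$ up to reordering and the $z_i^*z_i$ can be rewritten via \eqref{sph2}. This is the standard generation result for quantum projective space $\cO(\CC\PP^{n-1}_q)$, and I would invoke or reprove it by induction on the number of $z_n$-free factors. Next I would handle the $z_n$-dependence: a general degree-zero monomial has the form (block of first-$n$ coordinates)$\,\cdot z_n^{a_n}z_n^{*b_n}$, and degree-zero forces $a_n - b_n$ to be determined by the first block. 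The key observation is that whenever $a_n>0$ and $b_n>0$ one can cancel a matching pair using $z_nz_n^* = b_n$-type relations from \eqref{sph2} to reduce to the case where at most one of $a_n,b_n$ is nonzero; so it suffices to produce, for each occurrence of a single $z_n^*$ (balanced by total first-block weight $m$), the element $c_{\mathbf l}=z_0^{l_0}\cdots z_{n-1}^{l_{n-1}}z_n^*$, and symmetrically $c_{\mathbf l}^*$ for a single $z_n$.

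The main obstacle will be the bookkeeping that shows every admissible monomial factors, modulo the $b_{i,j}$-subalgebra, into products of the $c_{\mathbf l}$ and $c_{\mathbf l}^*$: I need to verify that an arbitrary word with $a_n$ copies of $z_n$ and matching first-block weight $m\cdot a_n$ can be written as a product of $a_n$ factors each carrying a single $z_n$, reordering the intervening $z_i$-powers past one another at the cost of powers of $q$ (from \eqref{sph1}) and absorbing leftover diagonal factors into polynomials in $a$ and the $b_{i,j}$. This requires care because splitting $z_0^{l_0}\cdots z_{n-1}^{l_{n-1}}$ with $\sum l_i$ a multiple of $m$ into blocks each summing to $m$ is not canonical, but any such splitting works up to $b_{i,j}$-corrections, and the relation $z_iz_i^*=b_{i,i}$ together with $\sum_j z_jz_j^*=1$ lets one rebalance. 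For part (2), the lens space $\cO(L^{2n+1}_q(m;1,\ldots,1,m))$ is the $\ZZ_m$-degree-zero subalgebra, so admissible monomials now satisfy $\sum_{i<n}(a_i-b_i)+m(a_n-b_n)\equiv 0 \pmod m$, i.e.\ $\sum_{i<n}(a_i-b_i)\equiv 0\pmod m$; this is exactly the condition that frees up $z_n$ (all powers of $z_n$ and $z_n^*$ are now allowed individually, giving the generator $z_n$ and hence $z_n^*=$ an adjoint), while the first-block part must have unweighted degree a multiple of $m$, which by the same reduction is generated by the $b_{i,j}$ together with the monomials $\tilde c_{\mathbf l}=z_0^{l_0}\cdots z_{n-1}^{l_{n-1}}$ of total degree $m$. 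Thus part (2) follows from the same normal-form analysis as part (1), with $z_n$ promoted to a free generator and the $z_n^*$-tail in $c_{\mathbf l}$ stripped off to yield $\tilde c_{\mathbf l}$.
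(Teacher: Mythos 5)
Your proposal is correct and follows essentially the same route as the paper's proof: both reduce to a spanning set of ordered monomials, impose the degree-zero condition $\sum_{i=0}^{n-1}l_i-\sum_{i=0}^{n-1}k_i=l_nm$, pair each $z_j^*$ ($j<n$) with one of the surplus $z_i$'s to produce factors $b_{i,j}$ (or $b_{j,i}^*$), and observe that the leftover monomial $z_0^{\tilde l_0}\cdots z_{n-1}^{\tilde l_{n-1}}z_n^{*l_n}$ with $\sum_i \tilde l_i=ml_n$ is, up to a nonzero scalar, a product of $l_n$ elements $c_{\l}$ (your $z_nz_n^*$-cancellation step is simply built into the paper's normal form \eqref{basis}, which never contains $z_n$ and $z_n^*$ simultaneously). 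One minor inaccuracy that does not affect the argument: the subalgebra of $\cO(S^{2n+1}_q)$ generated by $z_0,\ldots,z_{n-1}$ is \emph{not} presented by the relations of $\cO(S^{2n-1}_q)$ (in it $\sum_{j<n}z_jz_j^*=1-z_nz_n^*\neq 1$), but your conclusion about it stands, since all correction terms arising from \eqref{sph2} are polynomials in the $b_{j,j}$ and $1-\sum_{j<n}b_{j,j}$, hence stay in the $b_{i,j}$-subalgebra.
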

\begin{proof}
We prove statement (1). The second statement is proven by similar arguments. 
$\cO(S_q^{2n+1})$ is spanned by all elements of the form  
\begin{equation}\label{basis}
z_0^{l_0}z_1^{l_1} \ldots z_{n-1}^{l_{n-1}}z_0^{*k_0}z_1^{*k_1} \ldots z_{n-1}^{*k_{n-1}}z_n^{*l_n}, \quad l_i, k_i\in \NN,
\end{equation} 
and their conjugates. 
Since $\cO(S_q^{2n+1})$ is a $\ZZ$-graded algebra with the $*$-compatible grading $|z_i|_\ZZ = 1$, $i=0,\ldots, n-1$, $|z_n|_\ZZ = m$, 
calculating the degree of elements \eqref{basis} gives the degree-zero condition,
$$
\sum_{i=0}^{n-1} l_{i}-\sum_{i=0}^{n-1}k_i=l_nm, \quad \mbox{hence, in particular,}\quad \sum_{i=0}^{n-1} l_{i} \geq \sum_{i=0}^{n-1}k_i,
$$
i.e.\ there are at least as many generators $z_0, \ldots, z_{n-1}$ as $z_0^*, \ldots, z_{n-1}^*$. Pairing up the degree-zero elements of the form $z_i z_j^*$, produces a factor proportional to the products of  $b_{i,j} = z_iz_j^*$, if $i\leq j$,  and ,  $b_{j,i}^*$, if $j>j$, which are in the list of generators \eqref{bc}. This leaves us to consider elements of the form
\begin{equation}\label{final.step}
z_{0}^{\tilde{l}_{0}} \ldots z_{n-1}^{\tilde{l}_{n-1}}z_n^{*l_n}.
\end{equation}
The degree-zero condition yields  $\sum_{i=0}^p \tilde{l}_{i}=ml_n$, which implies 
that terms \eqref{final.step} are proportional to the product of $l_n$ elements of the form $c_\l$ in \eqref{bc}.
\end{proof}

We refer to $\cO(\WP^n_q(1,\ldots, 1, m))$ as the coordinate algebra of the {\em quantum (complex multidimensional) teardrop}. The knowledge of generators of algebras $\cO(\WP^n_q(1,\ldots, 1, m))$ and  $\cO(L^{2n+1}_q(m;1,\ldots, 1, m))$ allows one to describe irreducible bounded representations of these algebras. Their representation theory derives from the representations of the quantum sphere. All irreducible $*$-representations of $\cO(S_q^{2n+1})$, which do not have $z_n$ in their kernels, are labelled by $\lambda\in \CC$ such that $ |\lambda|=1$ and denoted by $\pi^n_{\lambda}$. The orthonormal basis for the corresponding representation space $\hh_{\lambda}^n$ is $|k_0,k_1, \ldots, k_{n-1}\rangle$, $k_i=0,1,2,\ldots$. On this basis of $\hh_{\lambda}^n$, the (bounded) operators representing $z_i$ act as follows: 
 \begin{subequations}
\label{rep.2}
\begin{gather}
\label{rep.2.1}
 \pi_{\lambda}^n(z_n) |k_0, \ldots, k_{n-1}\rangle = \lambda q^{\sum_{i=0}^{n-1}(k_i+1)}|k_0, \ldots, k_{n-1}\rangle ,\\
 \pi_{\lambda}^n(z_l) |k_0, \ldots, k_{n-1}\rangle =(1-q^{2k_l})^{1/2}q^{\sum_{i=0}^{l-1}(k_i+1)}|k_0, \ldots, k_l -1,  \ldots, k_{n-1}\rangle , \;\;\; l<n;
\end{gather}
\end{subequations}
see e.g.\ \cite[Section~3]{HawLan:fred}. For all $s=0,\ldots ,m-1$, define $\hh_{\lambda,s}^n$ as a subspace of $\hh_{\lambda}^n$ spanned by all $|k_0,k_1, \ldots, k_{n-1}\rangle$ such that $\sum_{i=0}^{n-1}k_i \equiv s \!\mod \! m$. This defines the direct sum decomposition,
$$
\hh_\lambda^n = \oplus_{s=0}^{m-1} \hh_{\lambda, s}^n.
$$
Since the operators $\pi_\lambda^n(z_n)$, $\pi_\lambda^n(b_{i,i})$ are diagonal, $\pi_\lambda^n(b_{i,j})$ , $i<j$  does not change the sum of the $k_i$ in $|k_0,k_1, \ldots, k_{n-1}\rangle$, while $\pi_\lambda^n(\tilde{c}_{\l})$ decreases this sum by $m$,
$$
\pi_\lambda^n\mid_{\cO(L^{2n+1}_q(m;1,\ldots, 1, m))}( \hh_{\lambda, s}^n) \subseteq \hh_{\lambda, s}^n.
$$
Thus $\pi_\lambda^n$ restriced to $\cO(L^{2n+1}_q(m;1,\ldots, 1, m))$ splits into $m$ irreducible, unitarily inequivalent representations $\pi_{\lambda,s}^n$, $s=0, \ldots , m-1$, of $\cO(L^{2n+1}_q(m;1,\ldots, 1, m))$. Further restriction of the $\pi_{\lambda,s}^n$ to  $\cO(\WP^n_q(1,\ldots, 1, m))$ removes the dependence on $\lambda$ (two restrictions corresponding to different values of $\lambda$ are unitarily equivalent), thus producing a family of $m$ infinite dimensional representations $\pi^n_s$  of $\cO(\WP^n_q(1,\ldots, 1, m))$, with corresponding Hilbert spaces $\hh^n_{s}$. 

Since, for all $n$, there is a $*$-algebra epimorphism
\begin{equation}\label{stos}
\cO(S_q^{2n+1})\longrightarrow \cO(S_q^{2n-1}), \qquad z_i\longmapsto \begin{cases} z_i & \mbox{for}\quad i\neq n,\cr 0 & \mbox{otherwise},
\end{cases}
\end{equation}
the representations of $\cO(S_q^{2n+1})$, which contain $z_n$ in their kernel reduce to the representations $\pi_\lambda^{n-1}$ of  $\cO(S_q^{2n-1})$ acting on $\hh^{n-1}_\lambda$,  which do not contain $z_{n-1}$ in their kernel and are of the type described above. Those that send $z_{n-1}$ to zero reduce to representation of $\cO(S_q^{2n-3})$, etc. The restriction of the map \eqref{stos} to  $\cO(L^{2n+1}_q(m;1,\ldots, 1, m))$ has its image in $\cO(L^{2n-1}_q(m;1,\ldots, 1))$, the $*$-subalgebra of $\cO(S_q^{2n-1})$ generated by the $b_{i,j}$ and $\tilde{c}_\l$ in \eqref{bc.lens}. By the same arguments as before, the representation space  $\hh_\lambda^{n-1}$ splits into $m$ irreducible components $\hh_{\lambda,s}^{n-1}$ on which $\cO(L^{2n-1}_q(m;1,\ldots, 1))$ is represented. 

The map \eqref{stos}  restricted to $\cO(\WP^n_q(1,\ldots, 1, m))$ maps onto the coordinate algebra of the standard quantum complex projective space $\cO(\CC\PP_q^{n-1})$, generated by $b_{i,j}$, whose representation which does not have  $b_{i,n-1}$ in its kernel is simply the restriction of the $\cO(S_q^{2n-1})$-representation $\pi_{\lambda=1}^{n-1}$. The representations with $b_{i,n-1}$ in their kernel are those of $\cO(\CC\PP_q^{n-2})$, etc.

The algebras of continuous functions on quantum
 teardrops $\cC(\WP^n_q(1,\ldots, 1, m))$  are obtained as $C^*$-completions of 
 $\cO(\WP^n_q(1,\ldots, 1, m))$.
 The foregoing analysis of representations allows one to compute the $K$-theory of  
$\cC(\WP^n_q(1,\ldots, 1, m))$.
\begin{proposition}\label{prop.K.proj}
For all positive integers $m$,
$$
K_0(\cC(\WP^n_q(1,\ldots, 1, m)))= \ZZ^m \oplus \ZZ^n, \qquad K_1(\cC(\WP^n_q(1,\ldots, 1, m))) =0.
$$
\end{proposition}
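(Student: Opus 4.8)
The plan is to present $\cC(\WP^n_q(1,\ldots,1,m))$ as an extension of the standard quantum projective space by $m$ copies of the algebra of compact operators, and then to extract the $K$-groups from the resulting six-term exact sequence. As recorded above, the restriction of the $*$-epimorphism \eqref{stos} to the teardrop algebra maps onto $\cO(\CC\PP_q^{n-1})$; after completion this yields a surjection $\rho\colon \cC(\WP^n_q(1,\ldots,1,m)) \to \cC(\CC\PP_q^{n-1})$. Writing $J:=\ker\rho$, the first task is to establish the short exact sequence
$$0 \longrightarrow J \longrightarrow \cC(\WP^n_q(1,\ldots,1,m)) \stackrel{\rho}{\longrightarrow} \cC(\CC\PP_q^{n-1}) \longrightarrow 0.$$

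The heart of the argument is the identification $J\cong \mathcal{K}^{\oplus m}$, for which I would use the representation theory assembled above. The irreducible representations that factor through $\rho$ are precisely those pulled back from $\CC\PP_q^{n-1}$ (equivalently, those containing $z_n$ in their kernel), so $J$ is annihilated by each of them and the remaining family $\pi^n_s$, $s=0,\ldots,m-1$, is jointly faithful on $J$. I would then check that $J$ acts by compacts in every $\pi^n_s$: the self-adjoint element $z_nz_n^*=1-\sum_{i=0}^{n-1}z_iz_i^*$ lies in $J$ (it is killed by $\rho$ thanks to the sphere relation in $\cO(\CC\PP_q^{n-1})$) and acts diagonally on $\hh^n_s$ with eigenvalues $q^{2(\sum_i k_i+n)}\to 0$, while each $c_{\l}$ acts as a weighted shift composed with such a diagonal compact. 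Hence $\pi^n_s(J)\subseteq \mathcal{K}(\hh^n_s)$, and since $\pi^n_s$ is irreducible and $\pi^n_s(z_nz_n^*)$ is a nonzero compact, in fact $\pi^n_s(J)=\mathcal{K}(\hh^n_s)$. As the $\pi^n_s$ are pairwise inequivalent and exhaust the spectrum of $J$, the map $\bigoplus_s\pi^n_s\colon J\to\bigoplus_{s=0}^{m-1}\mathcal{K}(\hh^n_s)$ is an isomorphism, so $J\cong\mathcal{K}^{\oplus m}$.

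With this in hand the computation is formal. One has $K_0(J)=\ZZ^m$, $K_1(J)=0$, together with the standard values $K_0(\cC(\CC\PP_q^{n-1}))=\ZZ^n$ and $K_1(\cC(\CC\PP_q^{n-1}))=0$. Feeding these into the six-term exact sequence of the extension, every connecting homomorphism either starts from or lands in a zero group; the sequence therefore collapses to $0\to K_1\to 0$, giving $K_1(\cC(\WP^n_q(1,\ldots,1,m)))=0$, and to a short exact sequence $0\to\ZZ^m\to K_0\to\ZZ^n\to 0$, which splits because $\ZZ^n$ is free, giving $K_0(\cC(\WP^n_q(1,\ldots,1,m)))=\ZZ^m\oplus\ZZ^n$. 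The input $K_*(\cC(\CC\PP_q^{n-1}))$ can be quoted from the literature, or obtained by running the same extension with $m=1$ (note $\WP^n_q(1,\ldots,1,1)=\CC\PP^n_q$ and that $J\cong\mathcal{K}$ in this case), which yields an inductive proof with base case $\CC\PP^0_q=\CC$.

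The main obstacle is the identification $J\cong\mathcal{K}^{\oplus m}$ --- more precisely, controlling the $C^*$-completion well enough that the primitive ideal space really splits as the $m$ top-stratum points $\{\pi^n_s\}$ together with the spectrum of $\CC\PP_q^{n-1}$, with no additional representations contributing to $J$. Once the extension is secured and the image $\pi^n_s(J)$ is pinned down as the full compacts, the remaining steps --- exactness, the automatic vanishing of the connecting maps, and the splitting --- are routine.
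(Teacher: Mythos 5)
Your proposal is correct and follows essentially the same route as the paper: realise $\cC(\WP^n_q(1,\ldots, 1, m))$ as an extension of $\cC(\CC\PP_q^{n-1})$ by $\cK^{\oplus m}$ via the map killing $z_n$, identify the kernel through the representations $\pi^n_s$, and then run the six-term sequence, in which the connecting maps vanish and the $K_0$-extension splits because $\ZZ^n$ is free. The only real difference is in one technical step: where you invoke irreducibility of $\pi^n_s$ together with the nonzero compact $\pi^n_s(z_nz_n^*)$ to conclude $\pi^n_s(J)=\cK$, the paper establishes this by hand, using the diagonal elements $c_ic_i^*$ (with $c_i = z_i^m z_n^*$) to separate the basis vectors of $\hh^n_s$ and the step-by-one operators $\pi_s(c_\l)$ to generate all of $\cK$ --- both arguments rest on the same classification of irreducible representations recorded earlier in the paper.
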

\begin{proof}
The epimorphism \eqref{stos} gives rise to the short exact sequence 
\begin{equation}\label{ses1}
\xymatrix{0 \ar[r] & J\ar[r] & \cO(S_q^{2n+1})\ar[r] & \cO(S_q^{2n-1}) \ar[r] & 0,}
\end{equation}
where $J$ is the $*$-ideal generated by $z_n$. Note that $\pi_\lambda^{n}(z_n)$ is a compact  operator, hence $\pi_\lambda^n(J)$ is contained in the set of compact operators on $\hh^n$.   Since the maps in   \eqref{ses1} preserve the  $\ZZ$-degrees and the $\ZZ$-grading defining quantum weighted projective spaces is strong, sequence \eqref{ses1} yields the exact sequence
\begin{equation}\label{ses2}
\xymatrix{0 \ar[r] & \bar{J} \ar[r] & \cO(\WP^n_q(1,\ldots, 1, m))\ar[r] & \cO(\CC\PP_q^{n-1}) \ar[r] & 0,}
\end{equation}
where $\bar{J} = J\cap \cO(\WP^n_q(1,\ldots, 1, m))$ is generated by the $c_\l$ and $z_nz_n^* = 1 -\sum_{i=0}^{n-1} b_i$, where $b_i := b_{i,j}$ in agreement with definitions \eqref{ab}. Since, for all $x\in J$, $\pi_\lambda(x)$ is a compact operator,  $\pi_s( \bar{J})\subseteq \cK$, the ideal of compact operators on $\hh_s$. Consequently, the topological closure of $\bar{J}$ is contained in the direct sum of $m$-copies of  $\cK$. Let 
$$
c_i := z_i^mz_n^*, \qquad i=0,1\ldots , n-1.
$$
The operators $\pi_s(c_ic_i^*)$ are diagonal operators, which, by inspection of \eqref{rep.2} can be seen to distinguish between the elements of the basis of $\hh^n_s$. More precisely, let
$$
\pi_s(c_ic_i^*) |k_0, \ldots, k_{n-1}\rangle = \gamma_\k ^i|k_0, \ldots, k_{n-1}\rangle.
$$
If $i \neq  j$, then $\gamma_\k ^i\neq \gamma_{\k} ^j$, and given two $n$-tuples of natural numbers $\k$, $\k'$, if $k_i\neq k'_i$, then  $\gamma_\k ^i\neq \gamma_{\k'} ^i$. This implies that (the completion of) $\pi_s( \bar{J})$ contains all orthogonal projections to one-dimensional subspaces of $\hh^n_s$ spanned by the orthonormal basis vectors $|k_0, \ldots, k_{n-1}\rangle$, $\sum_ik_i \equiv s \!\mod\! m$. Finally, $\pi_s(c_\l)$ are step-by-one  operators on $\hh_s^n$ with non-zero weights. Hence $\pi_s(\bar{J}) = \cK$ and,  consequently, the completion of $\bar{J}$ is equal to $\cK^{\oplus m}$. Therefore, the exact sequence \eqref{ses2} gives rise the the following exact sequence of $C^*$-algebras
\begin{equation}\label{ses3}
\xymatrix{0 \ar[r] & \cK^{\oplus m} \ar[r] & \cC(\WP^n_q(1,\ldots, 1, m))\ar[r] & \cC(\CC\PP_q^{n-1}) \ar[r] & 0.}
\end{equation}
Since $K_0(\cK)=\ZZ$, $K_1(\cK)=0$ and $K_0(\cC(\CC\PP_q^{n-1})) =\ZZ^n$, $K_1(\cC(\CC\PP_q^{n-1})) =0$ (see e.g.\ \cite[Section~4]{HonSzy:sph}), the K-theory six-term exact sequence derived from \eqref{ses3} yields the $K$-groups of quantum teardrops as stated.
\end{proof}

The knowledge of generators and representations allows one also to construct 1-sum\-mable Fredholm modules over $\cO(\WP^n_q(1,\ldots, 1, m))$. Following \cite{DAnLan:bou}, consider  representations of the odd-dimensional quantum sphere algebras  $\cO(S_q^{2n+1})$ defined as follows. For a fixed $n$,  representations $\bar\pi_k^n$ are labelled by $k=0, \ldots, n$, and are defined on the space $\hh^n = \ell^2(\NN^n)$ with the orthonormal basis $|\p\rangle$, where $\p=(p_1, \ldots, p_n)\in \NN^n$. For all $0\leq i <k$ let
$\e_i^k$ denote the sequence of length $n$ consisting of $i$ zeros, followed by $k-i$ units, followed by $n-k$ zeros. Then 
$$
\bar\pi_k^n(z_i) |\p\rangle = 
\begin{cases}  
q^{p_i +i}  \begin{cases}  
\sqrt{1-q^{2(p_{i+1}-p_i})} |\p+\e_i^k\rangle, & i < k\cr
 |\p\rangle, & i =k
\end{cases} , 
& p_1\leq \ldots \leq p_k, \; p_n <\ldots < p_{k+1} \cr
 & \cr 
0, & \mbox{otherwise}
\end{cases},
$$
with the convention that $p_0=0$. Representations forming a Fredholm module are defined, for all $x\in \cO(\WP^n_q(1,\ldots, 1, m))$, by 
$$
\pi_+^n(x) = \sum_{\mbox{$k$ even}} \bar\pi^n_k(x), \qquad \pi_-^n(x) = \sum_{\mbox{$k$ odd}} \bar\pi^n_k(x).
$$
It is shown in \cite[Proposition~3]{DAnLan:bou} that $\pi_+^n(b_{i,j}) - \pi_-^n(b_{i,j})$ are trace class operators. Since the generators $c_\l$ contain $z_n^*$, 
$$
\pi_+^n(c_\l) - \pi_-^n(c_\l) = (-1)^n \bar\pi^n_n(c_\l).
$$
The matrix coefficients of $\pi^n_n(c_\l)$ are bounded by $q^{p_n}$, and hence the trace of $\pi^n_n(c_\l)$ is bounded by
$$
\sum_{p_1 \leq p_2\leq \ldots \leq p_n}q^{p_n} = \sum_{r=0}^\infty\binom{r+n-1}{n-1}q^r,
$$
which is convergent, for all  $q\in (0,1)$. Therefore, $\pi_+^n(x) - \pi_-^n(x)$ is a trace class operator for all $x\in \cO(\WP^n_q(1,\ldots, 1, m))$. This yields a 1-summable Fredholm module of the standard form $(\hh^n\oplus\hh^n , \pi^n_{+} \oplus \pi^n_- ,F, \gamma)$,  
where
$$
 F = \begin{pmatrix} 0 & I \cr I & 0\end{pmatrix}, \qquad \gamma = \begin{pmatrix} I & 0 \cr 0 & -I\end{pmatrix}.
$$

\subsection{Quantum real teardrops.}
The generators of $*$-algebras $\cO(\RR\PP^{2n}_q(1,\ldots, 1, m))$ and $\cO(\Sigma^{2n+1}_q(m;1,\ldots, 1, m))$ can be described in a way similar to the complex teardrop case.
\begin{lemma}\label{lemma.gen.real} Let $m$ be a positive integer.
\begin{zlist}
\item As a  $*$-algebra $\cO(\RR\PP^{2n}_q(1,\ldots, 1, m))$  is generated by the following elements:
\begin{equation}\label{bc.real}
b_{i,j} := z_iz_j^* ,\quad c_{\l} := z_0^{l_0} z_1^{l_1}\cdots  z_{n-1}^{l_{n-1}}z_n w \quad \mbox{and} \quad d_{\p} := z_0^{k_0} z_1^{k_1}\cdots  z_{n-1}^{k_{n-1}}w 
\end{equation}
where $i\leq j$, $i,j = 0,\ldots, n-1$,  $\l=(l_0,\ldots ,l_{n-1}), \p=(p_0,\ldots ,p_{n-1})\in \NN^n$, $\sum_{i=0}^{n-1} l_i = m$ and $\sum_{i=0}^{n-1} p_i = 2m$.
\item  As a $*$-algebra $\cO(\Sigma^{2n+1}_q(m;1,\ldots, 1, m))$ is generated by the following elements:
\begin{equation}\label{bc.lens.real}
z_n, \quad w,\quad  b_{i,j} := z_iz_j^* \quad \mbox{and} \quad \tilde{c}_{\l} := z_0^{l_0} z_1^{l_1}\cdots  z_{n-1}^{l_{n-1}},
\end{equation}
where $i\leq j$, $i,j = 0,\ldots, n-1$,  $\l=(l_0,\ldots ,l_{n-1})\in \NN^n$ and $\sum_{i=0}^{n-1} l_i = m$.
\end{zlist} 
\end{lemma}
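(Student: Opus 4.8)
The plan is to mimic the proof strategy of Lemma~\ref{lemma.gen}, extracting the generators of each real teardrop algebra as the degree-zero (respectively degree-divisible-by-$m$) part of a spanning set of the prolongated sphere algebra $\cO(\Sigma^{2n+1}_q)$. Recall that $\cO(\Sigma^{2n+1}_q)$ is $\cO(S^{2n+1}_q)$ with the adjoined central unitary $w$ subject to $z_n^*=wz_n$, so a spanning set is obtained from \eqref{basis} by allowing an extra factor $w^t$, $t\in\ZZ$, on each element and its conjugate. The $*$-compatible $\ZZ$-grading is $|z_i|_\ZZ=1$ for $i<n$, $|z_n|_\ZZ=m$, and $|w|_\ZZ=-2m$, so every spanning monomial can be written as
$$
z_0^{l_0}\cdots z_{n-1}^{l_{n-1}} z_0^{*k_0}\cdots z_{n-1}^{*k_{n-1}} z_n^{*l_n} w^t
$$
(and conjugates), with $\ZZ$-degree $\sum_i l_i-\sum_i k_i - m l_n - 2mt$.

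For statement (1) I would impose the degree-zero condition and again \emph{pair up} each $z_iz_j^*$ factor into the generators $b_{i,j}$ (or $b_{j,i}^*$), reducing to monomials of the shape $z_0^{\tilde l_0}\cdots z_{n-1}^{\tilde l_{n-1}} z_n^{*l_n} w^t$ with $\sum_i\tilde l_i = m l_n + 2mt$. Using $z_n^*=wz_n$ to rewrite $z_n^{*l_n}w^t = z_n^{l_n} w^{l_n+t}$, the remaining task is to show every such monomial factors as a product of the $c_\l$ and $d_\p$ in \eqref{bc.real}. The point is that $c_\l = z_0^{l_0}\cdots z_{n-1}^{l_{n-1}}z_n w$ carries $n$ of the leading $z$'s (summing to $m$) together with one $z_n w$, accounting for one unit of $l_n$; while $d_\p = z_0^{p_0}\cdots z_{n-1}^{p_{n-1}}w$ carries $2m$ of the leading $z$'s together with one bare $w$. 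One splits $\sum_i\tilde l_i = m l_n+2mt$ into $l_n$ blocks of size $m$ (absorbed by $c_\l$ factors) and $t$ blocks of size $2m$ (absorbed by $d_\p$ factors), which is exactly arranged so the total power of $w$ matches $l_n+t$. I would also have to treat negative $t$ by invoking $w^{-1}=w^*$ and the conjugate generators.

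Statement (2) is the $\ZZ_m$-graded analogue and is strictly easier: here one keeps $z_n$ and $w$ themselves as generators, and only needs the monomials to have $\ZZ$-degree divisible by $m$. After pairing up $b_{i,j}$'s as before, the surviving leading strings $z_0^{\tilde l_0}\cdots z_{n-1}^{\tilde l_{n-1}}$ have total degree a multiple of $m$, hence decompose into the $\tilde c_\l$ of \eqref{bc.lens.real}, with all $z_n$- and $w$-content carried by the free generators $z_n,w$. The argument is formally identical to part (2) of Lemma~\ref{lemma.gen} with $w$ added as a central generator.

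The main obstacle I anticipate is purely bookkeeping: verifying that the combinatorial splitting in part (1) is always consistent, i.e.\ that for any admissible exponent vector the total $z$-content $\sum_i\tilde l_i$ can indeed be partitioned into $l_n$ blocks of size $m$ plus $t$ blocks of size $2m$ \emph{simultaneously} with the $w$-exponent matching. This hinges on the relation $|w|_\ZZ=-2m$ being exactly twice $|z_n|_\ZZ=m$, which is why the two generator families $c_\l$ (one $w$, one $z_n$) and $d_\p$ (one $w$, no $z_n$) together suffice; the delicate point is confirming no residual monomial escapes this factorization, which I would check by a short induction on $l_n+|t|$ reducing one block at a time.
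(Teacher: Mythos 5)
Your setup reproduces the paper's strategy almost verbatim: the spanning monomials of $\cO(\Sigma^{2n+1}_q)$, the $*$-compatible weighted $\ZZ$-grading, the pairing of $z_iz_j^*$ factors into the $b_{i,j}$, and the block factorization of the surviving monomials into products of $c_{\l}$'s and $d_{\p}$'s when the $w$-count is non-negative; your sketch of part (2) is also fine. The genuine gap sits exactly at the point you flag as the ``main obstacle'', namely the monomials with $t<0$ (in the paper's normal form $z_0^{l_0}\cdots z_{n-1}^{l_{n-1}}z_n^rw^s$, the case $s<r\le 2s$). Your proposed remedy --- invoking $w^{-1}=w^*$ and the conjugate generators, with an induction on $l_n+|t|$ ``reducing one block at a time'' --- does not work: the conjugates $c_{\l}^*$ and $d_{\p}^*$ contain factors $z_i^*$ with $i<n$ (e.g.\ $c_{\l}^*=z_nz_{n-1}^{*l_{n-1}}\cdots z_0^{*l_0}$ after simplification), which are absent from the monomials you still have to express, so multiplying by them throws you out of the class you are inducting on; and unitarity of $w$ is a tautology that produces nothing. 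Concretely, the simplest escaping element is $z_n^{*2}w^{-1}=z_n^2w$ (take $l_n=2$, $t=-1$, all $\tilde l_i=0$): it lies in $\cO(\RR\PP^{2n}_q(1,\ldots,1,m))$, but it admits no factorization into blocks of the form $c_{\l}$, $d_{\p}$ or their conjugates, so your induction has no valid step here.

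What is missing is the single identity on which the paper's proof of this case turns: combining $z_n^*=wz_n$ with the sphere relation \eqref{sph2}, one has $z_n^2w=z_n(wz_n)=z_nz_n^*=1-\sum_{j=0}^{n-1}b_{j,j}$, a polynomial in the $b$ generators (note this is a sum, not a product, of generators --- which is why no multiplicative block argument can find it). With this identity, a monomial with $s<r$ factors as $(z_n^2w)^{r-s}$ times a product of $2s-r$ elements $c_{\l}$, i.e.\ as $\bigl(1-\sum_{j=0}^{n-1}b_{j,j}\bigr)^{r-s}\prod_i c_{\l^i}$, which is precisely the paper's second case. Once you insert this identity, your induction does close up --- each step strips one factor $z_n^2w$, lowering $|t|$ by $1$ and $l_n$ by $2$ until $t\ge 0$, after which your block splitting applies --- but the tool that makes it run is the sphere relation, not the conjugate generators.
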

\begin{proof}
The proof is similar to that of Lemma~\ref{lemma.gen}, and, as in that proof we only justify statement (1). 
$\cO(\Sigma_q^{2n+1})$ is spanned by all elements of the form  
\begin{equation}\label{basis.sei}
z_0^{l_0}z_1^{l_1} \ldots z_{n-1}^{l_{n-1}}z_0^{*k_0}z_1^{*k_1} \ldots z_{n-1}^{*k_{n-1}}z_{n}^r w^s, \quad l_i, k_i, r\in \NN,\; s\in \ZZ;
\end{equation} 
see \cite{BrzZie:pri}. To derive generators \eqref{bc.real} we focus on those elements in the basis \eqref{basis.sei}, for which $2s\geq r$. Since $(z_n^rw^s)^* = z_n^r w^{r-s}$,   the elements such that $2s< r$ are simply $*$-conjugates of those with $2s\geq r$. Following the same arguments as in the proof of Lemma~\ref{lemma.gen}, the elements \eqref{basis.sei} are in  $\cO(\RR\PP^{2n}_q(1,\ldots, 1, m))$ provided
$$
\sum_{i=0}^{n-1} l_{i}-\sum_{i=0}^{n-1}k_i=m(2s-r) \geq 0.
$$
Thus we can group the $z_i$ with the $z_j^*$ to form the $b_{i,j}$ and be left to consider only those in \eqref{basis.sei} for which $k_0=\ldots = k_{n-1} =0$. The degree-zero condition $\sum_{i=0}^{n-1} l_{i}=m(2s-r)$ splits into two cases. If $s\geq r$, then
$$
z_0^{l_0}\ldots z_{n-1}^{l_{n-1}}z_{n}^r w^s \propto \prod_{i=1}^r c_{\l^i} \prod_{i=1}^{s-r} d_{\p^i}\, .
$$
If $s<r$, then set $k:= r-s$, so that $2s - r = r-2k$, and then
$$
z_0^{l_0}\ldots z_{n-1}^{l_{n-1}}z_{n}^r w^s \propto (z_n^2w)^k \prod_{i=1}^{r-2k} c_{\l^i} = \left(1-\sum_{j=0}^{n-1}b_{i,i}\right)^k \ \prod_{i=1}^{r-2k} c_{\l^i}\, .
$$
Therefore,  $\cO(\RR\PP^{2n}_q(1,\ldots, 1, m))$ is generated by elements listed in \eqref{bc.real}.
\end{proof}

In analogy to to $\cO(\WP^n_q(1,\ldots, 1, m))$,  $\cO(\RR\PP^{2n}_q(1,\ldots, 1, m))$ is referred to as the coordinate algebra of the {\em quantum real (multidimensional) teardrop}. Representations of algebras  $\cO(\RR\PP^{2n}_q(1,\ldots, 1, m))$ and $\cO(\Sigma^{2n+1}_q(m;1,\ldots, 1, m))$ arise from representations of $\cO (\Sigma^{2n+1}_q)$; see \cite[Section~5.2]{BrzZie:pri}. These split into two classes. If $z_n$ is not mapped to zero, the representation space $\hh_{\lambda,\pm}^n$ of $\pi_{\lambda,\pm}^n$, $|\lambda|=1$,   has an orthonormal basis: $|k_0,k_1, \ldots, k_{n-1}\rangle$, $k_i=0,1,2,\ldots$ and 
 \begin{subequations}
\label{rep.1.real}
\begin{gather}
 \pi_{\lambda,\pm}^n(z_n) |k_0, \ldots, k_{n-1}\rangle = \pm \lambda  q^{\sum_{i=0}^{n-1}(k_i+1)}|k_0, \ldots, k_{n-1}\rangle ,\\
 \pi_{\lambda,\pm}^n(z_l) |k_0, \ldots, k_{n-1}\rangle =(1-q^{2k_l})^{1/2} q^{\sum_{i=0}^{l-1}(k_i+1)}|k_0, \ldots, k_l -1,  \ldots, k_{n-1}\rangle , \quad l<n, \\
  \pi_{\lambda,\pm}^n(w) |k_0, \ldots, k_{n-1}\rangle =  \lambda^{-2} |k_0, \ldots, k_{n-1}\rangle .
\end{gather}
\end{subequations}
Restriction of these representations to $\cO(\RR\PP^{2n}_q(1,\ldots, 1, m))$ and $\cO(\Sigma^{2n+1}_q(m;1,\ldots, 1, m))$ splits them into $m$ irreducible components, as in the complex case. 
If $z_n$ is represented by the zero operator, then representations of $\cO(\Sigma^{2n+1}_q)$ become the same as those of $\cO(S^{2n-1}_q)$ tensored with the circle representation of $w$. The restrictions of the map  defined in $\cO(\Sigma^{2n+1}_q)$ by the formula  \eqref{stos} to  $\cO(\RR\PP^{2n}_q(1,\ldots, 1, m))$ and $\cO(\Sigma^{2n+1}_q(m;1,\ldots, 1, m))$  give rise to epimorphisms
$$
 \cO(\Sigma^{2n+1}_q(m;1,\ldots, 1, m)) \longrightarrow \cO(L^{2n-1}_q(2m; 1,\ldots ,1))\ot \cO(S^1), 
 $$
 and 
 \begin{equation}\label{ontolens}
 \cO(\RR\PP^{2n}_q(1,\ldots, 1, m))  \longrightarrow \cO(L^{2n-1}_q(2m; 1,\ldots ,1)),
\end{equation}
 and hence the representations in this case are derived from those of the lens and circle algebras.
 
 By the same arguments as in the proof of Proposition~\ref{prop.K.proj}, the epimorphism \eqref{ontolens} yields a short exact sequence of $C^*$-algebras
\begin{equation}\label{ses4}
\xymatrix{0 \ar[r] & \cK^{\oplus m} \ar[r] & \cC(\RR\PP^{2n}_q(1,\ldots, 1, m))\ar[r] & \cC(L_q^{2n-1}(2m;1,\ldots ,1)) \ar[r] & 0,}
\end{equation}
which allows one to calculate the $K$-groups of $\cC(\RR\PP^{2n}_q(1,\ldots, 1, m))$. By \cite[Proposition~2.2]{HonSzy:len}, $K_1(\cC(L_q^{2n-1}(2m;1,\ldots ,1))) = \ZZ$. The $K_0$-group of $\cC(L_q^{2n-1}(2m;1,\ldots ,1))$ has been calculated recently in \cite{AriBra:Gys}:
$$
 K_0(\cC(L_q^{2n-1}(2m;1,\ldots ,1))) = \ZZ\oplus \bigoplus_{i=1}^{n-1} \ZZ_{r_i},
 $$
 where the positive integers $r_i$ are defined as follows. Consider the $n\times n$-matrix $A$ with only non-zero entries 
 $$
 a_{ij} = (-1)^{i-j+1}\binom{2m}{i-j}, \qquad 0<i-j\leq \min(2m,n-1).
 $$ 
 Let $d_i$, $i=1,\ldots , n-1$ denote the greatest common divisors of non-zero minors of $A$ of size $i$. Then $r_1 := d_1$ and, for $i\neq 1$, $r_i := d_i/d_{i-1}$. In view of this analysis of $K$-groups of  quantum lens spaces, the sequence \eqref{ses4} yields the following six-term exact sequence of $K$-groups
 $$
 \xymatrix{0\ar[r] &  K_1(\cC(\RR\PP^{2n}_q(1,,\ldots, 1, m)))\ar[r] & \ZZ\ar[d]^\delta \\
 \ZZ\oplus \bigoplus_{i=1}^{n-1} \ZZ_{r_i} \ar[u] & \ar[l] K_0(\cC(\RR\PP^{2n}_q(1,\ldots, 1, m))) & \ar[l] \ZZ^m,}
 $$
 with the connecting map $\delta: i\mapsto (2i,\ldots ,2i)$. This form of $\delta$ is supported by the fact that in the ideal of $\cO(\RR\PP^{2n}_q(1,\ldots, 1, m))$ obtained as the restriction of the $*$-ideal of  $\cO (\Sigma^{2n+1}_q)$ generated by $z_n$, the only step operators are derived from the $d_\p$, and since $\sum_{i=0}^{n-1} k_i =2m$, the representation of $d_\p$ on each of the $\hh^{n-1}_{\lambda, s}$ has the form of a step-by-two operator. The map $\delta$ is injective and $\coker(\delta) = \ZZ^{m-1}\oplus \ZZ_2$, therefore,
 $$
K_1(\cC(\RR\PP^{2n}_q(1,\ldots, 1, m))) =0,
$$
and there is a short exact sequence
\begin{equation}\label{ses.real}
\xymatrix{0 \ar[r] & \ZZ^{m-1} \oplus \ZZ_2\ar[r] & K_0(\cC(\RR\PP^{2n}_q(1,\ldots, 1, m)))\ar[r] & \ZZ \oplus  \bigoplus_{i=1}^{n-1} \ZZ_{r_i} \ar[r] & 0}.
\end{equation} 
Note that $d_{n-1} = 2^{n-1}m^{n-1}$ and since $d_{n-1} = r_1r_2\ldots r_{n-1}$ at least one of the $r_i$ is even if $n>1$.   
As a consequence, the sequence \eqref{ses.real} as it stands  does not determine uniquely the $K_0$-group. More precisely,
 $K_0(\cC(\RR\PP^{2n}_q(1,\ldots, 1, m)))$ might be equal to 
$$
 \ZZ^{m} \oplus \ZZ_2\oplus \bigoplus_{i=1}^{n-1} \ZZ_{r_i} \quad \mbox{or} \quad  
\ZZ^{m} \oplus \ZZ_{2r_k}\oplus  \bigoplus_{\stackrel{i=1}{ i\neq k}}^{n-1} \ZZ_{r_i},
$$
for any even $r_k$. For example, 
for $n=2$, $r_1 =2m$, hence, 
$$
K_0(\cC(\RR\PP^{4}_q(1, 1, m)))= \ZZ^{m} \oplus \ZZ_2 \oplus \ZZ_{2m} \quad  \mbox{or} \quad  
K_0(\cC(\RR\PP^{4}_q(1, 1, m)))=  \ZZ^{m} \oplus \ZZ_{4m} ,
$$
while  for $n=3$, $r_1 =m $ and $r_2 =4m$, hence there are three possible forms of the group $K_0(\cC(\RR\PP^{6}_q(1, 1,1, m)))$ when $m$ is even
$$
\ZZ^{m} \oplus \ZZ_2 \oplus \ZZ_{m} \oplus \ZZ_{4m}, \qquad  \ZZ^{m} \oplus \ZZ_{2m} \oplus \ZZ_{4m}  \quad \mbox{or} \quad \ZZ^{m} \oplus \ZZ_{m} \oplus \ZZ_{8m}  ,
$$
with the  first two mutually isomorphic  if $m$ is odd. For a general $n$ and $m=1$, $r_{n-1} =2^{n-1}$ and $r_i =1$, for $i<n-1$, so there are two options for the $K_0$-group one of which is $K_0(\cC(\RR\PP^{2n}_q(1,\ldots, 1)))= \ZZ \oplus  \ZZ_{2^{n}}$,  which is in agreement with \cite[Section~5]{HonSzy:sph}.

For $n=1$, the third group in  \eqref{ses.real} has no torsion part, hence the sequence \eqref{ses.real}  splits and so
$K_0(\cC(\RR\PP^{2}_q(1, m)))= \ZZ^m \oplus \ZZ_2$ in agreement with \cite[Section~4.3]{Brz:Sei}.

 \section*{Acknowledgments}
 We would like to thank Tyrone Crisp for an interesting conversation about corners of graph $C^*$-algebras and in particular for drawing reference \cite{Cri:cor} to our attention. We would also like to thank Ulrich Kr\"ahmer for bringing reference \cite{BroGoo:lec} to our attention.

\end{document}